\date{\today}
\newcommand{\bbZ}{{\mathbb{Z}}}
\newcommand{\bbC}{{\mathbb{C}}}
\newcommand{\bbT}{{\mathbb{T}}}
\newcommand{\eqbg}{\begin{equation}\label}
\newcommand{\eqen}{\end{equation}}
\numberwithin{equation}{section}
\newtheorem{theorem}{Theorem}[section]
\newtheorem{lemma}[theorem]{Lemma}
\newtheorem{corollary}[theorem]{Corollary}
\theoremstyle{definition}
\newtheorem{remark}[theorem]{Remark}
\begin{document}

\title[]{On Scattering for CMV Matrices}
\author[]{ A. Kheifets, F. Peherstorfer,
and P. Yuditskii}

\thanks{The work was partially supported by
the Austrian Science Found FWF, project number: P16390--N04 }

\address{
Department of Mathematics, University of Massachusetts Lowell,
Lowell, MA, 01854, USA} \email{Alexander\_Kheifets@uml.edu}

\address{Institute for Analysis, Johannes Kepler University Linz,
A-4040 Linz, Austria}

\email{franz.peherstorfer@jku.at}

\email{petro.yuditskiy@jku.at}

\date{\today}

\maketitle

\begin{abstract}
Adamjan-Arov \cite{AA} (Lax--Phillips \cite{LP}) model space is
considered as a scattering representation space for a CMV matrix
\cite{Sv1} in context of new (extended Marchenko--Faddeev
\cite{Mar}) scattering theory developed in \cite{PeVoYu, TTh, VYu}.
That is, there exists a basis in which the multiplication by
independent variable is a CMV matrix. This basis as well as
Verblunski coefficients are computed explicitly in terms of Nehari
interpolation \cite{AAK, AK}. Asymptotically the Verblynski
coefficients go to zero. Moreover, relations between the basis and
wandering subspaces are established. Transformation from scattering
representation to spectral representation is given.
\end{abstract}

\section{Space $L^R$, its subspaces and functional models}

\bigskip\noindent Let $R(t)$ be a given Szeg\"o contractive function $R$
on the unite circle $\bbT$
  \begin{equation}\label{Szege-0}
  |R(t)|\le 1,\quad \log(1-|R|)\in L^1.
  \end{equation}
Then there exists a unique outer function $T$ such that
$$
|T|^2=1-|R|^2,\ \  T(0)>0.
$$
We consider the space $L^R$ of vector functions on $\bbT$ with the
following weighted inner product on it
\begin{equation}
\left < \begin{bmatrix} 1 & \overline R \\
R & 1 \end{bmatrix}^{-1} \begin{bmatrix} f_1 \\ f_2 \end{bmatrix},
\begin{bmatrix} f_1 \\ f_2 \end{bmatrix}\right >=
\left < \frac{1}{|T|^2}\begin{bmatrix} 1 & -\overline R \\
-R & 1 \end{bmatrix} \begin{bmatrix} f_1 \\ f_2
\end{bmatrix},
\begin{bmatrix} f_1 \\ f_2 \end{bmatrix}\right >.
\label{inner-prod-01}
\end{equation}
Since the weight matrix is greater than $\begin{bmatrix} 1 & 0 \\
0 & 0 \end{bmatrix}$ and greater than $\begin{bmatrix} 0 & 0 \\
0 & 1 \end{bmatrix}$, the entries $f_1$ and $f_2$, in particular,
belong to $L^2$. The set of functions
\begin{equation}\nonumber
\begin{bmatrix} 1 & \overline R \\
R & 1 \end{bmatrix} \begin{bmatrix} g_1 \\ g_2
\end{bmatrix}
\end{equation}
($g_1\in L^2$ and $g_2\in L^2$) is dense in $L^R$. We denote by
$U_R$ the (unitary) operator of multiplication by independent
variable $t$ on $L_R$. We consider the following subspaces in
$L^R$
\begin{equation}
\check{\mathcal H}_{n,m} = {\rm Clos}\left\{
\begin{bmatrix} 1 & \overline R \\
R & 1 \end{bmatrix}
\begin{bmatrix} t^n H^2_+ \\
{\overline t}^{\ m} {H^2_-}\end{bmatrix}\right\}, \label{H-nm-01}
\end{equation}
where $n$ and $m$ are integers. The subspaces appear in the
context of the Nehari problem if the Fourier coefficients of $R$
with indices from $-\infty$ to $-(n+m+1)$ are specified. Note that
$$
\check{\mathcal H}_{n,m}\supseteq \check{\mathcal
H}_{n,m+1}\quad{\rm and}\quad \check{\mathcal H}_{n,m}\supseteq
\check{\mathcal H}_{n+1,m},
$$
and that due to the Szeg\"o property (\ref{Szege-0}) of $R$,
$\check{\mathcal H}_{n,m+1}$ and $\check{\mathcal H}_{n+1,m}$ are
of codimension one in $\check{\mathcal H}_{n,m}$.
 Define
      subspaces ${\check\Delta}_{n,m}$ and $\widetilde{\check\Delta}_{n,m}$ of
      $\check{\mathcal H}_{n,m}$ by
      \begin{equation}  \label{defectspaces}
      {\check\Delta}_{n,m} = \check{\mathcal H}_{n,m} \ominus \check{\mathcal H}_{n+1,m}, \qquad
       \widetilde{\check\Delta}_{n,m} = \check{\mathcal H}_{n,m} \ominus \check{\mathcal H}_{n,m+1}.
      \end{equation}
$U_R$ maps $\check{\mathcal H}_{n,m+1}$ onto $\check{\mathcal
H}_{n+1,m}$ and, therefore, $\check{\mathcal
H}_{n,m}^\perp\oplus\widetilde{\check\Delta}_{n,m}$ onto
$\check{\mathcal H}_{n,m}^\perp\oplus{\check\Delta}_{n,m}$.

Let $\Delta=\widetilde{\Delta}=\bbC$ with unitary identification
maps
      $$ {\check i}_{n,m} \colon \Delta \mapsto
   {\check\Delta}_{n,m}, \qquad
    \widetilde {\check i}_{n,m} \colon \widetilde\Delta \mapsto
    \widetilde{\check\Delta}_{n,m}.
      $$
They are defined up to multiplying by unitary constants and we
will choose specific normalizations later. We denote by ${\check
K}_{n,m}$ and $\widetilde {\check K}_{n,m}$ the images of $1$
under ${\check i}_{n,m}$ and $\widetilde {\check i}_{n,m}$,
respectively and we will get "explicit" formulas for the vectors
${\check K}_{n,m}$ and $\widetilde {\check K}_{n,m}$ using a
construction motivated by the solution of the Nehari problem.
      Define a unitary operator
$$
       {\check U}_{0,n,m}  = \begin{bmatrix} A_{0,n,m} & B_{0,n,m} \\
        C_{0,n,m} & 0 \end{bmatrix}
        \colon \begin{bmatrix} \check{\mathcal
        H}_{n,m} \\ \Delta\end{bmatrix}
        \mapsto
       \begin{bmatrix} \check{\mathcal H}_{n,m} \\
       \widetilde \Delta \end{bmatrix},
      $$
using the decompositios
      $$
        {\check U}_{0,n,m} \colon  \begin{bmatrix} {\check{\mathcal H}_{n,m+1}}
        \oplus
        \widetilde {\check\Delta}_{n,m} \\ \Delta
        \end{bmatrix} \mapsto
        \begin{bmatrix} {\check{\mathcal H}_{n+1,m}} \oplus
        {\check\Delta}_{n,m} \\ \widetilde \Delta
        \end{bmatrix}
      $$
by
      \begin{equation}  \label{U0-1}
      {\check U}_{0,n,m}\colon \begin{bmatrix} h_{n,m+1} +\widetilde  {\check K}_{n,m}\cdot\beta  \\
      \alpha\end{bmatrix} \mapsto
       \begin{bmatrix}
      U_R h_{n,m+1} + {\check K}_{n,m}\cdot\alpha
       \\ \beta
       \end{bmatrix}
      \end{equation}
and a unitary operator $ {\check U}_{1,n,m}:\check{\mathcal
H}_{n,m}^\perp\oplus\widetilde\Delta \to \check{\mathcal
H}_{n,m}^\perp\oplus\Delta $ by
$$
{\check U}_{1,n,m}=\begin{bmatrix} A_{1,n,m} & B_{1,n,m} \\
        C_{1,n,m} & D_{1,n,m} \end{bmatrix}
        : \begin{bmatrix} h_{n,m}^\perp  \\
      \beta\end{bmatrix} \mapsto
      \begin{bmatrix} (h_{n,m}^{\perp})'  \\
      \alpha\end{bmatrix},
$$
such that
$$U_R: h_{n,m}^\perp +
      \widetilde {\check K}_{n,m}\cdot\beta\mapsto
      (h_{n,m}^{\perp})' + {\check K}_{n,m}\cdot
      \alpha
$$
Then $U_R$ is the feedback coupling of ${\check U}_{0,n,m}$ with
${\check U}_{1,n,m}$ given by the formula
\begin{equation}\label{U}
        U_R = 
        \begin{bmatrix} A_{0,n,m} +
        B_{0,n,m}D_{1,n,m}C_{0,n,m} & B_{0,n,m}C_{1,n,m} \\ B_{1,n,m} C_{0,n,m} & A_{1,n,m}
        \end{bmatrix}.
      \end{equation}
In addition to ${\check U}_{0,n,m}$ we consider it's unitary
dilation $\mathcal {\check U}_{0,n,m}$ that acts on
$$
\mathcal L_{n,m}=...\Delta\oplus\Delta\oplus \check{\mathcal
H}_{n,m}\oplus\widetilde\Delta\oplus\widetilde\Delta ...
$$
Subspaces of $L_R$
$$
\begin{bmatrix} 1 \\ R \end{bmatrix}L^2\quad {\rm and}\quad
\begin{bmatrix} \overline R \\ 1 \end{bmatrix}L^2
$$
can be embedded isometrically into $\mathcal L_{n,m}$ as follows.
Since $$\begin{bmatrix} 1 \\ R \end{bmatrix}t^n H^2_+\subset
\check{\mathcal H}_{n,m}\subset\mathcal L_{n,m}$$ and $$\begin{bmatrix} \overline R \\
1 \end{bmatrix}\overline t^m H^2_-\subset \check{\mathcal
H}_{n,m}\subset\mathcal L_{n,m}\ ,$$ we can define the embedding
as
$$
{\check i}_{n,m}':\begin{bmatrix} 1 \\ R \end{bmatrix}t^j H^2_+\to
\mathcal {\check U}_{0,n,m}^{j-n}\begin{bmatrix} 1 \\ R
\end{bmatrix}t^n H^2_+ ,\quad j\in\bbZ
$$
and
$$
{\check i}_{n,m}'':\begin{bmatrix} \overline R \\ 1
\end{bmatrix}\overline t^j H^2_-\to \mathcal {\check U}_{0,n,m}^{m-j}\begin{bmatrix}
\overline R \\ 1
\end{bmatrix}\overline t^m H^2_- ,\quad j\in\bbZ .
$$
They are well defined isometric mappings from dense linear subsets
of $\begin{bmatrix} 1 \\ R \end{bmatrix}L^2$ and $\begin{bmatrix}
\overline R \\ 1 \end{bmatrix}L^2$, respectively. Therefore, they
extend by continuity.

We consider four scales
\begin{eqnarray}\nonumber
&&\Psi_{0,n,m}: \Delta\to {\check i}_{n,m}\Delta,\quad
\widetilde\Psi_{0,n,m}: \widetilde\Delta\to \mathcal {\check
U}_{0,n,m}\widetilde
{\check i}_{n,m}\widetilde\Delta,\\
&&\Psi_{0,n,m}': E'\ \to {\check i}_{n,m}'\begin{bmatrix} 1 \\ R
\end{bmatrix}E'=\mathcal {\check U}_{0,n,m}^{-n} \begin{bmatrix} 1 \\ R
\end{bmatrix}t^n E',\nonumber\\
&&\Psi_{0,n,m}'': E''\to {\check i}_{n,m}''\begin{bmatrix} \overline R \\
1 \end{bmatrix}E''=\mathcal {\check U}_{0,n,m}^{m+1}\begin{bmatrix} \overline R \\
1 \end{bmatrix}\overline t^{\ m+1} E''\nonumber.
\end{eqnarray}
We also define the characteristic (scattering) measure of
$\mathcal {\check U}_{0,n,m}$ with respect to the vector scale
\begin{align}
   &
    \begin{bmatrix} \Psi_{0,n,m}^{*} \\
     \widetilde \Psi_{0,n,m}^{*} \\ \Psi_{0,n,m}^{\prime *}
     \\  \Psi_{0,n,m}^{\prime \prime *}\end{bmatrix} E_{\mathcal {\check U}_{0,n,m}}
    \begin{bmatrix} \Psi_{0,n,m} &
     \widetilde \Psi_{0,n,m} & \Psi_{0,n,m}^{\prime }
     &  \Psi_{0,n,m}^{\prime \prime }\end{bmatrix},
      \label{U0charfunc-full-0}
  \end{align}
where $E_{\mathcal {\check U}_{0,n,m}}$ is the spectral measure of
the unitary operator $ {\check U}_{0,n,m}$. It is absolutely
continuous with respect to the Lebesgue measure. Its density can
be formally defined as (precisely it is the boundary value of a
harmonic function)
\begin{align}
   {\check\Sigma}_{0,n,m}: &=&
    \begin{bmatrix} \Psi_{0,n,m}^{*} \\
     \widetilde \Psi_{0,n,m}^{*} \\ \Psi_{0,n,m}^{\prime *}
     \\  \Psi_{0,n,m}^{\prime \prime *}\end{bmatrix}
     \sum\limits _{k=-\infty}^\infty
     t^k\mathcal {\check U}_{0,n,m}^{*\ k}
      \begin{bmatrix} \Psi_{0,n,m} &
     \widetilde \Psi_{0,n,m} & \Psi_{0,n,m}^{\prime }
     &  \Psi_{0,n,m}^{\prime \prime }\end{bmatrix}
     \nonumber\\
  &=& \begin{bmatrix} 1 & {\check b}_{n,m} &
      {\check s}_{1,n,m} & 0 \\
      \overline {\check b}_{n,m} & 1 & 0 &
      \overline {\check s}_{2,n,m} \\
      \overline {\check s}_{1,n,m} & 0 & 1 & \overline {\check s}_{0,n,m}\\
      0 & {\check s}_{2,n,m} & {\check s}_{0,n,m} &
      1
\end{bmatrix}:\begin{bmatrix} \Delta \\ \widetilde\Delta \\
E' \\ E'' \end{bmatrix}\to\begin{bmatrix} \Delta \\ \widetilde\Delta \\
E' \\ E'' \end{bmatrix}.
      \label{U0charfunc-full-density-0}
  \end{align}
Since the scale $\begin{bmatrix} \Psi_{0,n,m}^{\prime }
     &  \Psi_{0,n,m}^{\prime \prime }\end{bmatrix}$
     is $*$-cyclic for $\mathcal {\check U}_{0,n,m}$, we have
$$
\begin{bmatrix} 1 & {\check b}_{n,m}\\ \overline {\check b}_{n,m} & 1 \end{bmatrix}=
\begin{bmatrix} {\check s}_{1,n,m} & 0 \\ 0 & \overline {\check s}_{2,n,m} \end{bmatrix}
\begin{bmatrix} 1 & \overline {\check s}_{0,n,m}\\ {\check s}_{0,n,m} & 1
\end{bmatrix}^{-1}
\begin{bmatrix} \overline {\check s}_{1,n,m} & 0\\ 0 & {\check s}_{2,n,m}
\end{bmatrix}.
$$
Therefore, the matrix
$$
{\check S}_{n,m}
   = \begin{bmatrix} {\check b}_{n,m} &
      {\check s}_{1,n,m} \\
      {\check s}_{2,n,m} & {\check s}_{0,n,m}
\end{bmatrix}:\begin{bmatrix}\widetilde\Delta \\
E' \end{bmatrix}\to\begin{bmatrix} \Delta \\ E'' \end{bmatrix}.
$$
is unitary almost everywhere on $\bbT$.

The entries of ${\check S}_{n,m}$ have these properties: ${\check
b}_{n,m}$ is analytic and ${\check b}_{n,m}(0)=0$ (although it is
not important to us, note that ${\check b}_{n,m}$ is a
characteristic function of the colligation ${\check
U}_{0,n,m}^*$); ${\check s}_{1,n,m}=\overline t^{\ n}{\check
a}_{n,m}$ and ${\check s}_{2,n,m}=\overline t^{\ m}{\check
a}_{n,m}$, where ${\check a}_{n,m}$ is an outer function. We can
fix now normalization of ${\check i}_{n,m}$ and $\widetilde
{\check i}_{n,m}$ (in a unique way) such that ${\check a}_{n,m}
(0)>0$. Note that if $n+m=n'+m'$ then
$$
U_R^{n'-n} : \check{\mathcal H}_{n,m} \to  \check{\mathcal
H}_{n',m'}
$$
Therefore, unitary colligations ${\check U}_{0,n,m}$ and ${\check
U}_{0,n',m'}$ are unitarily equivalent. Hence, (under the above
normalization of ${\check i}_{n,m}$ and $\widetilde {\check
i}_{n,m}$) we have ${\check b}_{n,m}={\check b}_{n',m'}$,
consequently, ${\check a}_{n,m}={\check a}_{n',m'}$. We also get
$$
{\check s}_{0,n,m}=-\frac{{\check s}_{2,n,m}}{\overline {\check
s}_{1,n,m}} {\check b}_{n,m}= -\overline t^{\ m+n}\frac{{\check
a}_{n,m}}{\overline {\check a}_{n,m}} {\check b}_{n,m}= -\overline
t^{\ m'+n'}\frac{{\check a}_{n',m'}}{\overline {\check a}_{n',m'}}
{\check b}_{n',m'}= {\check s}_{0,n',m'}
$$
Let $\check\omega_{n,m}$ be the characteristic function of
${\check U}_{1,n,m}^*,\
\check\omega_{n,m}(\zeta):\Delta\to\widetilde\Delta$. Analogous to
the above is that $\check\omega _{n,m}=\check\omega_{n',m'}$ if
$n+m=n'+m'$. In view of the above remarks sometimes we will use
the notations ${\check b}_{n+m}, {\check a}_{n+m}, {\check
s}_{0,n+m}, \check\omega _{n+m}$ instead of the former ones. We
can also write ${\check s}_{1,n,m}=\overline t^{\ n}{\check
a}_{n+m}$ and ${\check s}_{2,n,m}=\overline t^{\ m}{\check
a}_{n+m}$. Since $U_R$ is the feedback coupling of ${\check
U}_{0,n,m}$ with ${\check U}_{1,n,m}$ we get that
$$
R={\check s}_{0,n+m}+\overline t^{\ n+m} \frac{{\check
a}_{n+m}^2\check\omega_{n+m}}{1-\check\omega_{n+m}{\check
b}_{n+m}}= \overline t^{\ n+m} \cdot\frac{{\check
a}_{n+m}}{\overline {\check a}_{n+m}}\cdot\frac{\check\omega
_{n+m}-\overline {\check b}_{n+m}}{1-\check\omega_{n+m}{\check
b}_{n+m}}\ .
$$
To formulate one more crucial property of ${\check\Sigma}_{0,n,m}$
we need a functional model of $\mathcal {\check U}_{0,n,m}$. The
Fourier representation is defined as
$$
\begin{bmatrix} \Psi_{0,n,m}^{*} \\
     \widetilde \Psi_{0,n,m}^{*} \\ \Psi_{0,n,m}^{\prime *}
     \\  \Psi_{0,n,m}^{\prime \prime *}\end{bmatrix}
     \sum\limits _{k=-\infty}^\infty
     t^k\mathcal {\check U}_{0,n,m}^{*\ k}: \mathcal L_{n,m}\to
     L^{{\check\Sigma}_{0,n,m}}.
     $$
     Since ${\check U}_{0,n,m}$ goes to multiplication by $t$ under this
     transform and due to (\ref{U0charfunc-full-0}) we get that
     $$
     \begin{bmatrix} 1 \\ R \end{bmatrix}t^n H^2_+\to
     \begin{bmatrix} 1 & {\check b}_{n+m} &
      {\check s}_{1,n,m} & 0 \\
      \overline {\check b}_{n+m} & 1 & 0 &
      \overline {\check s}_{2,n,m} \\
      \overline {\check s}_{1,n,m} & 0 & 1 & \overline {\check s}_{0,n+m}\\
      0 & {\check s}_{2,n,m} & {\check s}_{0,n+m} &
      1
\end{bmatrix}
     \begin{bmatrix}
      0 \\ 0 \\ t^n H^2_+ \\ 0
 \end{bmatrix}
     $$
     and
     $$
\begin{bmatrix} \overline R \\ 1 \end{bmatrix}\overline t^{\ m}H^2_-\to
\begin{bmatrix} 1 & {\check b}_{n+m} &
      {\check s}_{1,n,m} & 0 \\
      \overline {\check b}_{n+m} & 1 & 0 &
      \overline {\check s}_{2,n,m} \\
      \overline {\check s}_{1,n,m} & 0 & 1 & \overline {\check s}_{0,n+m}\\
      0 & {\check s}_{2,n,m} & {\check s}_{0,n+m} &
      1
\end{bmatrix}
\begin{bmatrix}  0 \\ 0 \\ 0\\ \overline t^{\ m}H^2_-
\end{bmatrix}.
     $$
     Thus,
     \begin{equation}
     \check{\mathcal H}_{n,m}\to\ {\rm Clos \ }
     \begin{bmatrix} 1 & {\check b}_{n+m} &
      {\check s}_{1,n,m} & 0 \\
      \overline {\check b}_{n+m} & 1 & 0 &
      \overline {\check s}_{2,n,m} \\
      \overline {\check s}_{1,n,m} & 0 & 1 & \overline {\check s}_{0,n+m}\\
      0 & {\check s}_{2,n,m} & {\check s}_{0,n+m} &
      1
\end{bmatrix}
      \begin{bmatrix} 0\\0\\t^n H^2_+ \\ \overline t^{\ m}H^2_-
      \end{bmatrix}.\label{H-nm-Sigma-0}
     \end{equation}
     Also
     $$
     \ldots\Delta\oplus\Delta\to
\begin{bmatrix} 1 & {\check b}_{n+m} &
      {\check s}_{1,n,m} & 0 \\
      \overline {\check b}_{n+m} & 1 & 0 &
      \overline {\check s}_{2,n,m} \\
      \overline {\check s}_{1,n,m} & 0 & 1 & \overline {\check s}_{0,n+m}\\
      0 & {\check s}_{2,n,m} & {\check s}_{0,n+m} &
      1
\end{bmatrix}
\begin{bmatrix} H^2_- \\ 0 \\ 0 \\ 0 \end{bmatrix}
     $$
and
$$
     \widetilde\Delta\oplus\widetilde\Delta\ldots\to
\begin{bmatrix} 1 & {\check b}_{n+m} &
      {\check s}_{1,n,m} & 0 \\
      \overline {\check b}_{n+m} & 1 & 0 &
      \overline {\check s}_{2,n,m} \\
      \overline {\check s}_{1,n,m} & 0 & 1 & \overline {\check s}_{0,n+m}\\
      0 & {\check s}_{2,n,m} & {\check s}_{0,n+m} &
      1
\end{bmatrix}
\begin{bmatrix} 0\\ H^2_+ \\ 0 \\ 0 \end{bmatrix}.
     $$
     This implies that
     \begin{equation}\label{K-nm-0}
     {\check K}_{n,m}\to
     \begin{bmatrix} 1 \\
      \overline {\check b}_{n+m}  \\
      \overline {\check s}_{1,n,m} \\
      0
\end{bmatrix}={\check\Sigma}_{0,n,m}
\begin{bmatrix} 1\\0\\0\\0 \end{bmatrix}
\end{equation}
and
\begin{equation}\label{tilde-K-nm-0}
\widetilde {\check K}_{n,m}\to
\begin{bmatrix} {\check b}_{n+m} \\ 1  \\ 0 \\ {\check s}_{2,n,m}
\end{bmatrix}\overline t
={\check\Sigma}_{0,n,m}
\begin{bmatrix} 0\\ \overline t\\0\\0 \end{bmatrix}.
\end{equation}
Thus, we arrive at a crucial property of the entries of the matrix
${\check\Sigma}_{0,n,m}$:
\begin{equation}\label{approxim1}
\begin{bmatrix} 1 \\
      \overline {\check b}_{n+m}  \\
      \overline {\check s}_{1,n,m} \\
      0
\end{bmatrix}
\in \ {\rm Clos \ }
     \begin{bmatrix}
      {\check s}_{1,n,m} & 0 \\
      0 &
      \overline {\check s}_{2,n,m} \\
      1 & \overline {\check s}_{0,n+m}\\
      {\check s}_{0,n+m} &
      1
\end{bmatrix}
      \begin{bmatrix} t^n H^2_+ \\ \overline t^{\ m}H^2_-
      \end{bmatrix}
\end{equation}
and
\begin{equation}\label{approxim2}
\begin{bmatrix} {\check b}_{n+m} \\ 1  \\ 0 \\ {\check s}_{2,n,m}
\end{bmatrix}\overline t\in \ {\rm Clos \ }
     \begin{bmatrix}
      {\check s}_{1,n,m} & 0 \\
      0 &
      \overline {\check s}_{2,n,m} \\
      1 & \overline {\check s}_{0,n+m}\\
      {\check s}_{0,n+m} &
      1
\end{bmatrix}
      \begin{bmatrix} t^n H^2_+ \\ \overline t^{\ m}H^2_-
      \end{bmatrix}.
\end{equation}
In other notations
\begin{equation}\label{approxim1short}
{\check\Sigma}_{0,n,m}
\begin{bmatrix} 1\\0\\0\\0 \end{bmatrix}\in \ {\rm Clos \ }
{\check\Sigma}_{0,n,m}
\begin{bmatrix} 0\\0\\t^n H^2_+ \\ \overline t^{\ m}H^2_-
      \end{bmatrix},
\end{equation}
and
\begin{equation}\label{approxim2short}
{\check\Sigma}_{0,n,m}
\begin{bmatrix} 0\\ \overline t\\0\\0 \end{bmatrix}\in \ {\rm Clos \ }
{\check\Sigma}_{0,n,m}
\begin{bmatrix} 0\\0\\t^n H^2_+ \\ \overline t^{\ m}H^2_-
      \end{bmatrix}.
\end{equation}
Note that since the scale $\begin{bmatrix} \Psi_{0,n,m}^{\prime }
     &  \Psi_{0,n,m}^{\prime \prime }\end{bmatrix}$ is $*$-cyclic
$$
L^{{\check\Sigma}_{0,n,m}}=\ {\rm Clos \ }
     \begin{bmatrix}
      {\check s}_{1,n,m} & 0 \\
      0 &
      \overline {\check s}_{2,n,m} \\
      1 & \overline {\check s}_{0,n+m}\\
      {\check s}_{0,n+m} &
      1
\end{bmatrix}
      \begin{bmatrix} L^2 \\ L^2
      \end{bmatrix}
      =\ {\rm Clos \ }{\check\Sigma}_{0,n,m}
      \begin{bmatrix} 0\\0\\L^2 \\ L^2
      \end{bmatrix}
$$
and $L^{{\check s}_{0,n+m}}$ is mapped unitarily onto
$L^{{\check\Sigma}_{0,n,m}}$
 by
     $$ \begin{bmatrix} f' \\ f'' \end{bmatrix}\to
     \begin{bmatrix}
\begin{bmatrix} {\check s}_{1,n,m} & 0 \\ 0 & \overline {\check s}_{2,n,m} \end{bmatrix}
\begin{bmatrix} 1 & \overline {\check s}_{0,n+m}\\ {\check s}_{0,n+m} & 1
\end{bmatrix}^{-1}
\begin{bmatrix} f' \\ f'' \end{bmatrix}
\\ \\
\begin{bmatrix} f' \\ f'' \end{bmatrix}
\end{bmatrix}
     $$
However, this does not imply (\ref{approxim1short}) and
(\ref{approxim2short}).

A functional model for ${\check U}_{1,n,m}^*$ (using the scales
$\Psi_{n,m}$ and $\widetilde\Psi_{n,m}$ defined below) is the
"multiplication by $\overline t\ $" : ${\mathcal
H}^{\check\omega_{n+m}}\oplus\Delta\to {\mathcal
H}^{\check\omega_{n+m}}\oplus\widetilde\Delta$, where ${\mathcal
H}^{\check\omega_{n+m}}$ is a Hilbert space of vector functions
$$\begin{bmatrix}
h_-^{\check\omega_{n+m}}\\h_+^{\check\omega_{n+m}}\end{bmatrix},
\quad h_-^{\check\omega_{n+m}}\in H^2_-,\quad
h_+^{\check\omega_{n+m}}\in H^2_+
$$
with the norm
$$\Vert
\begin{bmatrix}
h_-^{\check\omega_{n+m}}\\h_+^{\check\omega_{n+m}}\end{bmatrix}\Vert^2=
\langle
\begin{bmatrix} 1 & \overline{\check\omega}_{n+m} \\ \check\omega_{n+m} & 1
\end{bmatrix}^{-1}
\begin{bmatrix}
h_-^{\check\omega_{n+m}}\\h_+^{\check\omega_{n+m}}\end{bmatrix},
\begin{bmatrix}
h_-^{\check\omega_{n+m}}\\h_+^{\check\omega_{n+m}}\end{bmatrix}
\rangle .
$$
${\check U}_{1,n,m}$ is realized as the "multiplication by $t$",
respectively.

We consider four scales associated to $U_R$
\begin{eqnarray}\nonumber
&&\Psi_{n,m}: \Delta\to {\check i}_{n,m}\Delta,\quad
\widetilde\Psi_{n,m}: \widetilde\Delta\to \mathcal U_{R}\widetilde
{\check i}_{n,m}\widetilde\Delta,\\
&&\Psi': E'\ \to \begin{bmatrix} 1 \\ R
\end{bmatrix}E',\quad\Psi'': E''\to \begin{bmatrix} \overline R \\
1 \end{bmatrix}E''\nonumber.
\end{eqnarray}
We consider the characteristic (scattering) measure of $\mathcal
U_{R}$ with respect to the vector scale
\begin{align}
   &
    \begin{bmatrix} \Psi_{n,m}^{*} \\
     \widetilde \Psi_{n,m}^{*} \\ \Psi^{\prime *}
     \\  \Psi^{\prime \prime *}\end{bmatrix} E_{\mathcal U_{R}}
    \begin{bmatrix} \Psi_{n,m} &
     \widetilde \Psi_{n,m} & \Psi^{\prime }
     &  \Psi^{\prime \prime }\end{bmatrix},
      \label{U-R-charfunc-full}
  \end{align}
where $E_{\mathcal U_{R}}$ is the spectral measure of the unitary
operator $ U_{R}$. It is absolutely continuous with respect to the
Lebesgue measure (this follows from $*$-cyclicity of the scale
$\begin{bmatrix} \Psi^{\prime }
     &  \Psi^{\prime \prime }\end{bmatrix}$).
Its density can be formally defined as (precisely it is the
boundary value of a harmonic function)
\begin{equation}
   {\check\Sigma}_{n,m}: =
    \begin{bmatrix} \Psi_{n,m}^{*} \\
     \widetilde \Psi_{n,m}^{*} \\ \Psi^{\prime *}
     \\  \Psi^{\prime \prime *}\end{bmatrix}
     \sum\limits _{k=-\infty}^\infty
     t^k\mathcal U_{R}^{*\ k}
      \begin{bmatrix} \Psi_{n,m} &
     \widetilde \Psi_{n,m} & \Psi^{\prime }
     &  \Psi^{\prime \prime }\end{bmatrix}.
     \nonumber
\end{equation}
Using feedback decomposition (\ref{U}) it can be expressed as
\begin{align}
   {\check\Sigma}_{n,m}
   = \begin{bmatrix} {\check\Sigma}_{n,m}^{(11)} &
   {\check\Sigma}_{n,m}^{(12)} \\ \\
   {\check\Sigma}_{n,m}^{(21)} &
   \Sigma^{(22)}
    \end{bmatrix}:\begin{bmatrix} \Delta \\ \widetilde\Delta \\
E' \\ E'' \end{bmatrix}\to\begin{bmatrix} \Delta \\ \widetilde\Delta \\
E' \\ E'' \end{bmatrix},
      \label{U-R-charfunc-full-density}
  \end{align}
where
\begin{align}\label{Ucharfunc'11-sh}
\Sigma^{(11)}_{n,m}&=\frac{1}{2}\left(\frac{I+
\begin{bmatrix} 0 & {\check b}_{n+m} \\ \check\omega_{n+m} & 0\end{bmatrix}}
{I-
\begin{bmatrix} 0 & {\check b}_{n+m} \\ \check\omega_{n+m} & 0\end{bmatrix}}+
\frac{I+
\begin{bmatrix} 0 & \overline{\check\omega}_{n+m} \\ \overline {\check b}_{n+m} & 0\end{bmatrix}}
{I-
\begin{bmatrix} 0 & \overline{\check\omega}_{n+m} \\ \overline {\check b}_{n+m} & 0\end{bmatrix}}
 \right):\begin{bmatrix} \Delta \\ \widetilde\Delta \end{bmatrix}\to
 \begin{bmatrix} \Delta \\ \widetilde\Delta \end{bmatrix}\\
 \nonumber\\
&=\begin{bmatrix} \frac{1}{2}\frac{1+{\check
b}_{n+m}\check\omega_{n+m}}{1-{\check b}_{n+m}\check\omega_{n+m}}&
\frac{{\check b}_{n+m}}{1-{\check b}_{n+m}\check\omega_{n+m}}\\ \\
\frac{\check\omega_{n+m}}{1-\check\omega_{n+m}{\check b}_{n+m}}&
\frac{1}{2}\frac{1+\check\omega_{n+m}{\check
b}_{n+m}}{1-\check\omega_{n+m}{\check b}_{n+m}}
\end{bmatrix} +
\begin{bmatrix}\frac{1}{2}\frac{1+\overline{\check\omega}_{n+m}\overline {\check b}_{n+m}}
{1-\overline{\check\omega}_{n+m}\overline {\check b}_{n+m}} &
\frac{\overline{\check\omega}_{n+m}}{1-\overline {\check b}_{n+m}\overline{\check\omega}_{n+m}}\\ \\
\frac{\overline {\check
b}_{n+m}}{1-\overline{\check\omega}_{n+m}\overline {\check
b}_{n+m}}& \frac{1}{2}\frac{1+\overline {\check
b}_{n+m}\overline{\check\omega}_{n+m}}
{1-\overline {\check b}_{n+m}\overline{\check\omega}_{n+m}}\end{bmatrix},\nonumber \\
\nonumber\\
\label{Ucharfunc'12-sh} \Sigma^{(12)}_{n,m}&=
\begin{bmatrix} 1 & \overline{\check\omega}_{n+m} \\
\check\omega_{n+m} & 1
\end{bmatrix}
\begin{bmatrix}
\frac{{\check s}_{1,n,m}}{1-{\check b}_{n+m}\check\omega_{n+m}}  &
0
\\ \\
0  & \frac{ \overline {\check s}_{2,n,m}}{1-\overline {\check
b}_{n+m}\overline{\check\omega}_{n+m}}
\end{bmatrix}\quad\quad\quad :\begin{bmatrix}
E' \\ E'' \end{bmatrix}\to\begin{bmatrix} \Delta \\
\widetilde\Delta \end{bmatrix},
\\
\label{Ucharfunc'21-sh}
\Sigma^{(21)}_{n,m}&= \Sigma^{(12)*}_{n,m}\\
&=
\begin{bmatrix}
\frac{\overline {\check
s}_{1,n,m}}{1-\overline{\check\omega}_{n+m}\overline {\check
b}_{n+m}} & 0
\\ \\
0  & \frac{ {\check s}_{2,n,m}}{1-\check\omega_{n+m}{\check
b}_{n+m}}
\end{bmatrix}
\begin{bmatrix} 1 & \overline{\check\omega}_{n+m} \\
\check\omega_{n+m} & 1
\end{bmatrix}
\quad\quad\quad :\begin{bmatrix} \Delta \\
\widetilde\Delta \end{bmatrix}\to
\begin{bmatrix}
E' \\ E'' \end{bmatrix},\nonumber\\
\label{Ucharfunc'-22-sh}
\Sigma^{(22)}&=\begin{bmatrix} 1 & \overline R \\
R & 1
\end{bmatrix}
\quad\quad\quad\quad\quad\quad\quad
\quad\quad\quad\quad\quad\quad\quad \quad\quad\quad\ \
 :\begin{bmatrix} E' \\ E''
\end{bmatrix}\to
\begin{bmatrix}
E' \\ E'' \end{bmatrix}.
\end{align}
Since the scale $\begin{bmatrix} \Psi_{n,m}^{\prime }
     &  \Psi_{n,m}^{\prime \prime }\end{bmatrix}$
     is $*$-cyclic for $\mathcal U_{R}$, we have
\begin{equation}\label{AAK}
\Sigma^{(11)}_{n,m}= \Sigma^{(12)}_{n,m} \Sigma^{(22)^{-1}}
\Sigma^{(21)}_{n,m}\ ,
\end{equation}
and $L^{R}$ is mapped unitarily onto $L^{{\check\Sigma}_{n,m}}$
 by
     \begin{equation}\label{L-R-L-Sigma-nm}
      \begin{bmatrix} f' \\ f'' \end{bmatrix}\to
     \begin{bmatrix}
\Sigma^{(12)}_{n,m} \Sigma^{(22)^{-1}}
\begin{bmatrix} f' \\ f'' \end{bmatrix}
\\ \\
\begin{bmatrix} f' \\ f'' \end{bmatrix}
\end{bmatrix}.
     \end{equation}

It follows from general feedback loading arguments that
$$
\ {\rm Clos \ }
     {\check\Sigma}_{0,n,m}
      \begin{bmatrix} 0\\0\\ t^n H^2_+ \\  \overline t^{\ m}H^2_-
      \end{bmatrix}=
\ {\rm Clos \ }
     \begin{bmatrix}
      {\check s}_{1,n,m} & 0 \\
      0 &
      \overline {\check s}_{2,n,m} \\
      1 & \overline {\check s}_{0,n+m}\\
      {\check s}_{0,n+m} &
      1
\end{bmatrix}
      \begin{bmatrix} t^n H^2_+ \\ \\ \overline t^{\ m}H^2_-
      \end{bmatrix}
$$
is unitarily mapped onto $\check{\mathcal H}_{n,m}$ by means of
multiplication by
$$
\begin{bmatrix}
0 & \frac{\overline {\check
s}_{1,n,m}\overline{\check\omega}_{n+m}}{1-\overline
{\check b}_{n+m}\overline{\check\omega}_{n+m}} & 1 & 0 \\ \\
\frac{{\check s}_{2,n,m}\check\omega_{n+m}}{1-{\check
b}_{n+m}\check\omega_{n+m}} & 0 &  0 & 1
\end{bmatrix}.
$$
This fact also follows from the next formula since Fourier
coefficients of ${\check s}_{0,n,m}$ and $R$ from $-\infty$ to
$-(n+m+1)$ agree:
\begin{equation}\label{Sigma-0-Sigma}
\begin{bmatrix} 0&0&1&0\\ 0&0&0&1 \end{bmatrix}{\check\Sigma}_{n,m}=
\begin{bmatrix}
0 & \frac{\overline {\check
s}_{1,n,m}\overline{\check\omega}_{n+m}}{1-\overline
{\check b}_{n+m}\overline{\check\omega}_{n+m}} & 1 & 0 \\ \\
\frac{{\check s}_{2,n,m}\check\omega_{n+m}}{1-{\check
b}_{n+m}\check\omega_{n+m}} & 0 &  0 & 1
\end{bmatrix}{\check\Sigma}_{0,n,m}.
\end{equation}
In particular, this implies, in view of (\ref{K-nm-0}),
(\ref{tilde-K-nm-0}) and (\ref{Sigma-0-Sigma}), that
\begin{equation}\label{K-nm}
     {\check K}_{n,m}=
     \begin{bmatrix}
\frac{\overline {\check
s}_{1,n,m}}{1-\overline{\check\omega}_{n+m}\overline
{\check b}_{n+m}} \\ \\
\frac{{\check s}_{2,n,m}\check\omega_{n+m}}{1-{\check
b}_{n+m}\check\omega_{n+m}}
\end{bmatrix}
     ={\check\Sigma}_{n,m}^{(21)}
\begin{bmatrix} 1\\0 \end{bmatrix}
\end{equation}
and
\begin{equation}\label{tilde-K-nm}
\widetilde {\check K}_{n,m}=
\begin{bmatrix}
\frac{\overline {\check
s}_{1,n,m}\overline{\check\omega}_{n+m}}{1-\overline
{\check\omega}_{n+m}\overline
{\check b}_{n+m}} \\ \\
\frac{{\check s}_{2,n,m}}{1-{\check b}_{n+m}\check\omega_{n+m}}
\end{bmatrix}
\overline t ={\check\Sigma}_{n,m}^{(21)}
\begin{bmatrix} 0\\ \overline t\end{bmatrix}.
\end{equation}
At the same time $\mathcal H^{\check\omega_{n+m}}$ is unitarily
mapped onto $\mathcal H_{n,m}^\perp$ by the formula
\begin{equation}\label{couple-perp}
\mathcal {\check H}_{n,m}^\perp=
\begin{bmatrix}
\frac{\overline {\check
s}_{1,n,m}}{1-\overline{\check\omega}_{n+m}\overline {\check
b}_{n+m}} & 0
\\ \\
0  & \frac{ {\check s}_{2,n,m}}{1-\check\omega_{n+m}{\check
b}_{n+m}}
\end{bmatrix}
\mathcal H^{\check\omega_{n+m}}.
\end{equation}
We can also write (unitarily equivalent)
$L^{{\check\Sigma}_{n,m}}$ realization for $L^R$ using
(\ref{L-R-L-Sigma-nm}). In this realization $\mathcal H_{n,m}$
looks as
$$ \ {\rm Clos \ }
     \begin{bmatrix}
      \Sigma^{(12)}_{n,m}\\ \\ \Sigma^{(22)}
\end{bmatrix}
      \begin{bmatrix} t^n H^2_+ \\ \\ \overline t^{\ m}H^2_-
      \end{bmatrix}=\ {\rm Clos \ }{\check\Sigma}_{n,m}
      \begin{bmatrix} 0\\0\\ t^n H^2_+ \\ \overline t^{\ m}H^2_-
      \end{bmatrix}.
$$
Since the factor in (\ref{couple-perp}) is
${\check\Sigma}_{n,m}^{(21)}
\begin{bmatrix} 1 & \overline{\check\omega}_{n+m} \\
\check\omega_{n+m} & 1 \end{bmatrix}^{-1}$, then
$L^{{\check\Sigma}_{n,m}}$ realization of $\mathcal
H_{n,m}^{\perp}$ looks as
$$
\begin{bmatrix} \Sigma^{(11)}_{n,m} \\ \\ \Sigma^{(21)}_{n,m}\end{bmatrix}
\begin{bmatrix} 1 & \overline{\check\omega}_{n+m} \\
\check\omega_{n+m} & 1 \end{bmatrix}^{-1} \mathcal
H^{\check\omega_{n+m}}.
$$
In view of (\ref{K-nm}), (\ref{tilde-K-nm}) and (\ref{AAK})
\begin{equation}\label{Sigma-K-nm-K-nm-0}
{\check K}_{n,m}\to
\begin{bmatrix}{\check\Sigma}_{n,m}^{(11)}\\ \\
{\check\Sigma}_{n,m}^{(21)}\end{bmatrix}
\begin{bmatrix} 1 \\ 0 \end{bmatrix}=
{\check\Sigma}_{n,m}\begin{bmatrix} 1 \\ 0 \\ 0 \\ 0
\end{bmatrix},
\ 
\widetilde {\check K}_{n,m}\to
\begin{bmatrix}{\check\Sigma}_{n,m}^{(11)}\\ \\
{\check\Sigma}_{n,m}^{(21)}\end{bmatrix}
\begin{bmatrix} 0 \\ \overline t \end{bmatrix}=
{\check\Sigma}_{n,m}\begin{bmatrix} 0 \\ \overline t \\ 0 \\ 0
\end{bmatrix}.
\end{equation}
We can see from the later formulas that in this realization
${\check K}_{n,m}$ reproduces the $0$ Fourier coefficient of the
first entry in $L^{{\check\Sigma}_{n,m}}$ and $\widetilde {\check
K}_{n,m}$ reproduces the $-1$ Fourier coefficient of the second
entry in $L^{{\check\Sigma}_{n,m}}$.

Just in case we mention here that
$$
{\check\Sigma}_{n,m}=
\begin{bmatrix}
\frac{1}{1-{\check b}_{n+m}\check\omega_{n+m}} &
\frac{\overline{\check\omega}_{n+m}}{1-\overline
{\check b}_{n+m}\overline{\check\omega}_{n+m}} & 0 & 0 \\ \\
\frac{\check\omega_{n+m}}{1-{\check b}_{n+m}\check\omega_{n+m}} &
\frac{1}{1-\overline
{\check b}_{n+m}\overline{\check\omega}_{n+m}} & 0 & 0 \\ \\
0 & \frac{\overline {\check
s}_{1,n,m}\overline{\check\omega}_{n+m}}{1-\overline
{\check b}_{n+m}\overline{\check\omega}_{n+m}} & 1 & 0 \\ \\
\frac{{\check s}_{2,n,m}\check\omega_{n+m}}{1-{\check
b}_{n+m}\check\omega_{n+m}} & 0 &  0 & 1
\end{bmatrix}{\check\Sigma}_{0,n,m}.
$$

\bigskip

\section{ Verblunsky coefficients}

\bigskip

In this section we discuss recurrent relations between vectors
${\check K}_{n,m}, \widetilde {\check K}_{n,m}$. Since the both
pairs of vectors ${\check K}_{n,m}, \widetilde {\check K}_{n+1,m}$
and $\widetilde {\check K}_{n,m}, {\check K}_{n,m+1}$ form
orthonormal bases for $H_{n+1,m+1}\ominus H_{n,m}$ they are
related by a unitary matrix
\begin{equation}\label{verbl-01}
\begin{bmatrix} {\check K}_{n,m} & \widetilde {\check K}_{n+1,m}\end{bmatrix} = \begin{bmatrix}
\widetilde {\check K}_{n,m} &
{\check K}_{n,m+1}\end{bmatrix} \begin{bmatrix} {\check\alpha}_{n,m} & {\check\rho}_{n,m} \\
\widetilde{\check\rho}_{n,m} & \widetilde{\check\alpha}_{n,m}
\end{bmatrix}.
\end{equation}
Note first that since the transformation matrix in
(\ref{verbl-01}) is unitary then either both ${\check\rho}_{n,m}$
and $\widetilde{\check\rho}_{n,m}$ equal zero or both do not. If
both equal zero then ${\check K}_{n,m}$ and $\widetilde {\check
K}_{n,m}$ are proportional, meaning that $\mathcal H_{n+1,
m}=\mathcal H_{n, m+1}$, which is impossible. Thus,
${\check\rho}_{n,m}\ne 0$ and $\widetilde{\check\rho}_{n,m}\ne 0$.
Using formulas (\ref{K-nm}) and (\ref{tilde-K-nm}) for ${\check
K}_{n,m}$ and $\widetilde {\check K}_{n,m}$ we can write
(\ref{verbl-01}) as
\begin{eqnarray}\label{verbl-01-explicit} &&\begin{bmatrix}
\frac{t^n\overline {\check a}_{n+m}}{1-\overline
{\check\omega}_{n+m}\overline {\check b}_{n+m}} &
\frac{t^n\overline {\check
a}_{n+m+1}\overline{\check\omega}_{n+m+1}}{1-\overline
{\check\omega}_{n+m+1}\overline {\check b}_{n+m+1}}
\\ \\
\frac{\overline t^{\ m} {\check
a}_{n+m}\check\omega_{n+m}}{1-{\check b}_{n+m}\check\omega_{n+m}}
& \frac{\overline t^{\ m+1} {\check a}_{n+m+1}}{1-{\check
b}_{n+m+1}\check\omega_{n+m+1}}
\end{bmatrix}
= \\ \nonumber\\ \nonumber\\
&&\begin{bmatrix} \frac{t^{n-1}\overline {\check
a}_{n+m}\overline{\check\omega}_{n+m}}{1-\overline
{\check\omega}_{n+m}\overline {\check b}_{n+m}} & \frac{t^n
\overline {\check
a}_{n+m+1}}{1-\overline{\check\omega}_{n+m+1}\overline {\check
b}_{n+m+1}}
\\ \\
\frac{\overline t^{\ m+1}{\check a}_{n+m}}{1-{\check
b}_{n+m}\check\omega_{n+m}} & \frac{\overline t^{\ m+1}{\check
a}_{n+m+1}\check\omega_{n+m+1}}{1-{\check
b}_{n+m+1}\check\omega_{n+m+1}}
\end{bmatrix}
\begin{bmatrix} {\check\alpha}_{n,m} & {\check\rho}_{n,m} \\
\widetilde{\check\rho}_{n,m} & \widetilde{\check\alpha}_{n,m}
\end{bmatrix}. \nonumber
\end{eqnarray}
Compare $(1,1)$ entries on the left and on the right, divide them
by $t^n$ and take the "zero" Fourier coefficient. We get
$$
\overline {\check a}_{n+m}(0)=\overline {\check
a}_{n+m+1}(0)\widetilde{\check\rho}_{n,m}.
$$
Therefore, according to our normalization,
$$\widetilde{\check\rho}_{n,m}=
\frac{\overline {\check a}_{n+m}(0)}{\overline {\check
a}_{n+m+1}(0)}= \frac{{\check a}_{n+m}(0)}{{\check
a}_{n+m+1}(0)}>0 .
$$
Comparing $(1,2)$ entries the
same way we get
$$\overline {\check a}_{n+m+1}(0)\overline{\check\omega}_{n+m+1}(0)=
\overline {\check a}_{n+m+1}(0)\widetilde{\check\alpha}_{n,m} ,
$$
i.e,
$$\widetilde{\check\alpha}_{n,m}=\overline{\check\omega}_{n+m+1}(0).$$
Comparing $(2,1)$ entries, multiplying by $t^{m+1}$ and taking
"zero" Fourier coefficient we get
$$
0={\check a}_{n+m}(0){\check\alpha}_{n,m}+{\check
a}_{n+m+1}(0)\check\omega_{n+m+1}(0)\widetilde{\check\rho}_{n,m}.
$$
Substituting the above formula for $\widetilde{\check\rho}_{n,m}$
we obtain
$$
{\check\alpha}_{n,m}=-\check\omega_{n+m+1}(0).
$$
Comparing $(2,2)$ entries, multiplying by $t^{m+1}$ and taking
"zero" Fourier coefficient we get
$$
{\check a}_{n+m+1}(0)={\check a}_{n+m}(0){\check\rho}_{n,m}+
{\check
a}_{n+m+1}(0)\check\omega_{n+m+1}(0)\widetilde{\check\alpha}_{n,m}.
$$
Since $\check\omega_{n+m+1}(0)=\overline{\widetilde\alpha}_{n,m}$
and
$1-|\widetilde{\check\alpha}_{n,m}|^2=|\widetilde{\check\rho}_{n,m}|^2$
(because the transformation matrix is unitary), we have
$$
{\check a}_{n+m+1}(0)|\widetilde{\check\rho}_{n,m}|^2={\check
a}_{n+m}(0){\check\rho}_{n,m}.
$$
Using the above formula for $\widetilde{\check\rho}_{n,m}$ we get
$${\check\rho}_{n,m}=
\frac{{\check a}_{n+m}(0)}{{\check
a}_{n+m+1}(0)}=\widetilde{\check\rho}_{n,m}.
$$
Thus, (\ref{verbl-01}) takes on the form
\begin{equation}\label{verbl-02}
\begin{bmatrix} {\check K}_{n,m} & \widetilde {\check K}_{n+1,m}\end{bmatrix} = \begin{bmatrix}
\widetilde {\check K}_{n,m} & {\check K}_{n,m+1}\end{bmatrix}
\begin{bmatrix}
{\check\alpha}_{n+m} & {\check\rho}_{n+m} \\
{\check\rho}_{n+m} & -\overline{\check\alpha}_{n+m}\end{bmatrix},
\end{equation}
where ${\check\rho}_{n+m}= \frac{{\check a}_{n+m}(0)}{{\check
a}_{n+m+1}(0)}>0$ and $
{\check\alpha}_{n+m}=-\check\omega_{n+m+1}(0)$ (because the
entries of the transformation matrix depend on the sum of the
indices only, we changed the notations). Note also that
${\check\rho}_{n+m}=\sqrt {1-|{\check\alpha}_{n+m}|^2}$ and
$|{\check\alpha}_{n+m}|<1$.

On the other hand the transformation matrix can be computed as
$$
\begin{bmatrix} \widetilde {\check K}_{n,m}^* \\
{\check K}_{n,m+1}^*\end{bmatrix}
\begin{bmatrix} {\check K}_{n,m} & \widetilde {\check K}_{n+1,m}\end{bmatrix} =
\begin{bmatrix} {\check\alpha}_{n+m} & {\check\rho}_{n+m} \\
{\check\rho}_{n+m} & -\overline{\check\alpha}_{n+m} \end{bmatrix}.
$$
We can compute $\langle {\check K}_{n,m}, \widetilde {\check
K}_{n,m}\rangle $ using $L^{{\check\Sigma}_{0,n,m}}$
representation (formulas (\ref{K-nm-0}) and (\ref{tilde-K-nm-0}))
or $L^{{\check\Sigma}_{n,m}}$ realization (formulas
(\ref{Sigma-K-nm-K-nm-0}))
\begin{eqnarray}
{\check\alpha}_{n+m}&=&\langle {\check K}_{n,m}, \widetilde
{\check K}_{n,m}\rangle =\langle {\check\Sigma}_{0,n,m}
\begin{bmatrix} 1\\0\\0\\0 \end{bmatrix},
\begin{bmatrix} 0\\ \overline t \\0\\0 \end{bmatrix}\rangle\nonumber\\
&=& ((2,1) {\rm\ entry\ of\ }{\check\Sigma}_{0,n,m})_{-1}=
 \frac{\overline {\check b}_{n+m}}{\overline t}(0).\nonumber
 \end{eqnarray}
 In particular we get
$$
\check\omega_{n+m+1}(0)=-\frac{\overline {\check
b}_{n+m}}{\overline t}(0).
$$
Note that since ${\check\rho}_j\le 1$, the sequence ${\check
a}_j(0)$ is increasing. On the other hand, since ${\check a}_j$ is
a Schur class analytic function, ${\check a}_j(0)\le 1$.
Therefore, the limit of ${\check a}_j(0)$ exists and
$$
0<\lim_{j\to\infty}{\check a}_j(0)\le 1.
$$
In particular, this implies that
$$
\lim_{j\to\infty}{\check\rho}_j=1 \quad {\rm and}\
\lim_{j\to\infty}{\check\alpha}_j=0.
$$

\section{A basis for $L^R$ and the matrix of $U_R$ in this basis}

\bigskip
We consider a chain of subspaces in $L^R$
\begin{equation}
\ldots \supset \mathcal H_{n-1,n-1}\supset \mathcal
H_{n,n-1}\supset \mathcal H_{n,n} \supset \mathcal
H_{n+1,n}\supset \mathcal H_{n+1,n+1}\supset \mathcal
H_{n+2,n+1}\supset\ldots \label{chain-of-subspaces-01}
\end{equation}
The first index increases first and then the second one increases.
Every subsequent subspace is of codimension one in the preceding
one. Since the union of subspaces (\ref{chain-of-subspaces-01}) is
dense in $L^R$, the sequence of vectors
\begin{equation}
\ldots {\check K}_{n-1,n-1}, \widetilde {\check K}_{n,n-1},
{\check K}_{n,n} , \widetilde {\check K}_{n+1,n}, {\check
K}_{n+1,n+1}, \widetilde {\check K}_{n+2,n+1}\ldots
\label{basis-01}
\end{equation}
forms an orthonormal basis for $L^R$.

We want to get the matrix of the operator $U_R$ in the basis
(\ref{basis-01}). Since $U_R$ is a linear operator we have from
(\ref{verbl-02})
\begin{equation}\label{U-R-Verblunsky1}
\begin{bmatrix} U_R {\check K}_{n,n} & U_R \widetilde {\check K}_{n+1,n}\end{bmatrix} = \begin{bmatrix}
U_R \widetilde {\check K}_{n,n} & U_R {\check
K}_{n,n+1}\end{bmatrix}
\begin{bmatrix}
{\check\alpha}_{2n} & {\check\rho}_{2n} \\
{\check\rho}_{2n} & -\overline{\check\alpha}_{2n}\end{bmatrix}.
\end{equation}
Since
$$
\widetilde {\check K}_{n,n}\in H_{n,n}\ominus H_{n,n+1},\quad
{\check K}_{n,n+1}\in H_{n,n+1}\ominus H_{n+1,n+1}
$$
then
$$
U_R\widetilde {\check K}_{n,n}\in H_{n+1,n-1}\ominus
H_{n+1,n},\quad U_R {\check K}_{n,n+1}\in H_{n+1,n}\ominus
H_{n+2,n}.
$$
Due to normalization ${\check a}_k(0)>0$ we have
$$
U_R\widetilde {\check K}_{n,n}=\widetilde {\check
K}_{n+1,n-1},\quad U_R {\check K}_{n,n+1}={\check K}_{n+1,n}.
$$
Applying (\ref{verbl-02}) with $n:=n, m:=n-1$ we obtain
\begin{equation}\label{Verblunsky2-(2n-1)}
\widetilde {\check K}_{n+1,n-1} =
\begin{bmatrix} \widetilde {\check K}_{n,n-1} &
{\check K}_{n,n}\end{bmatrix} \begin{bmatrix} {\check\alpha}_{2n-1} & {\check\rho}_{2n-1} \\
{\check\rho}_{2n-1} & -\overline{\check\alpha}_{2n-1}
\end{bmatrix}
\begin{bmatrix} 0 \\ 1 \end{bmatrix}.
\end{equation}
Applying (\ref{verbl-02}) with $n:=n+1, m:=n$ we obtain
\begin{equation}\label{Verblunsky2-(2n+1)}
{\check K}_{n+1,n} =
\begin{bmatrix} \widetilde {\check K}_{n+1,n} &
{\check K}_{n+1,n+1}\end{bmatrix} \begin{bmatrix} {\check\alpha}_{2n+1} & {\check\rho}_{2n+1} \\
{\check\rho}_{2n+1} & -\overline{\check\alpha}_{2n+1}
\end{bmatrix}
\begin{bmatrix} 1 \\ 0 \end{bmatrix}.
\end{equation}
Substituting (\ref{Verblunsky2-(2n-1)}) and
(\ref{Verblunsky2-(2n+1)}) to (\ref{U-R-Verblunsky1}), we get
\begin{eqnarray}\label{U-R-matrix}
&&\begin{bmatrix} U_R {\check K}_{n,n} & U_R \widetilde {\check K}_{n+1,n}\end{bmatrix} = \\
&&\begin{bmatrix} \widetilde {\check K}_{n,n-1} & {\check K}_{n,n}
& \widetilde {\check K}_{n+1,n} & {\check
K}_{n+1,n+1}\end{bmatrix}
\begin{bmatrix} {\check\rho}_{2n-1} & 0\\
-\overline{\check\alpha}_{2n-1} & 0 \\
0 & {\check\alpha}_{2n+1}\\
0 & {\check\rho}_{2n+1}
\end{bmatrix}
\begin{bmatrix} {\check\alpha}_{2n} & {\check\rho}_{2n} \\
{\check\rho}_{2n} & -\overline{\check\alpha}_{2n}
\end{bmatrix}.\nonumber
\end{eqnarray}
From this we see that the matrix of $U_R$ is a CMV matrix.

\bigskip

\section{Verblunsky coefficients as Schur parameters}

\bigskip

We summarize first results of the previous sections.
\begin{eqnarray}\label{U-R-matrix-rep}
&&\begin{bmatrix} t {\check K}_{n,n} & t \widetilde {\check K}_{n+1,n}\end{bmatrix} = \\
&&\begin{bmatrix} \widetilde {\check K}_{n,n-1} & {\check K}_{n,n}
& \widetilde {\check K}_{n+1,n} & {\check
K}_{n+1,n+1}\end{bmatrix}
\begin{bmatrix} {\check\rho}_{2n-1} & 0\\
-\overline{\check\alpha}_{2n-1} & 0 \\
0 & {\check\alpha}_{2n+1}\\
0 & {\check\rho}_{2n+1}
\end{bmatrix}
\begin{bmatrix} {\check\alpha}_{2n} & {\check\rho}_{2n} \\
{\check\rho}_{2n} & -\overline{\check\alpha}_{2n}
\end{bmatrix},\nonumber
\end{eqnarray}
where
\begin{equation}\label{K-nm-rep}
     {\check K}_{n,m}=
     \begin{bmatrix}
\frac{t^n \overline {\check a}_{n+m}}{1-\overline
{\check\omega}_{n+m}\overline
{\check b}_{n+m}} \\ \\
\frac{\bar t^m {\check a}_{n+m}\check\omega_{n+m}}{1-{\check
b}_{n+m}\check\omega_{n+m}}
\end{bmatrix}
     ={\check\Sigma}_{n,m}^{(21)}
\begin{bmatrix} 1\\0 \end{bmatrix},
\end{equation}
\begin{equation}\label{tilde-K-nm-rep}
\widetilde {\check K}_{n,m}=
\begin{bmatrix}
\frac{t^n\overline {\check
a}_{n+m}\overline{\check\omega}_{n+m}}{1-\overline
{\check\omega}_{n+m}\overline
{\check b}_{n+m}} \\ \\
\frac{\bar t^m {\check a}_{n+m}}{1-{\check
b}_{n+m}\check\omega_{n+m}}
\end{bmatrix}
\overline t ={\check\Sigma}_{n,m}^{(21)}
\begin{bmatrix} 0\\ \overline t\end{bmatrix},
\end{equation}
\begin{eqnarray}
{\check\alpha}_{n+m}=
 \frac{\overline {\check b}_{n+m}}{\overline t}(0)
 =-\check\omega_{n+m+1}(0),\nonumber
 \end{eqnarray}
\begin{eqnarray}
{\check\rho}_{n+m}=\sqrt{1-|{\check\alpha}_{n+m}|^2}. \nonumber
 \end{eqnarray}
Also ${\check\rho}_{n+m}=\frac{{\check a}_{n+m}(0)}{{\check
a}_{n+m+1}(0)}$. Substitute all this in (\ref{U-R-matrix-rep})
\begin{eqnarray}\label{U-R-matrix-subst}
&& \ \ \begin{bmatrix} \frac{t^{n+1} \overline {\check
a}_{2n}}{1-\overline {\check\omega}_{2n}\overline {\check s}_{2n}} &
\frac{t^{n+1}\overline {\check
a}_{2n+1}\overline{\check\omega}_{2n+1}}{1-\overline
{\check\omega}_{2n+1}\overline {\check s}_{2n+1}}
\\ \\
\frac{\bar t^{n-1} {\check a}_{2n}\check\omega_{2n}}{1-{\check
s}_{2n}\check\omega_{2n}} & \frac{\bar t^n {\check
a}_{2n+1}}{1-{\check s}_{2n+1}\check\omega_{2n+1}}
\end{bmatrix}
= \\
&&\begin{bmatrix} \frac{t^{n-1}\overline {\check
a}_{2n-1}\overline{\check\omega}_{2n-1}}{1-\overline
{\check\omega}_{2n-1}\overline {\check s}_{2n-1}} &
 \frac{t^{n} \overline
{\check a}_{2n}}{1-\overline {\check\omega}_{2n}\overline {\check
s}_{2n}} & \frac{t^{n}\overline {\check
a}_{2n+1}\overline{\check\omega}_{2n+1}}{1-\overline
{\check\omega}_{2n+1}\overline {\check s}_{2n+1}} & \frac{t^{n+1}
\overline {\check a}_{2n+2}}{1-\overline
{\check\omega}_{2n+2}\overline {\check s}_{2n+2}}
\\ \\
\frac{\bar t^{n} {\check a}_{2n-1}}{1-{\check
s}_{2n-1}\check\omega_{2n-1}}
 &
\frac{\bar t^{n} {\check a}_{2n}\check\omega_{2n}}{1-{\check
s}_{2n}\check\omega_{2n}}
 &
\frac{\bar t^{n+1} {\check a}_{2n+1}}{1-{\check
s}_{2n+1}\check\omega_{2n+1}}
 &
\frac{\bar t^{n+1} {\check a}_{2n+2}\check\omega_{2n+2}}{1-{\check
s}_{2n+2}\check\omega_{2n+2}}
\end{bmatrix}
\begin{bmatrix} {\check\rho}_{2n-1} & 0\\
-\overline{\check\alpha}_{2n-1} & 0 \\
0 & {\check\alpha}_{2n+1}\\
0 & {\check\rho}_{2n+1}
\end{bmatrix}
\begin{bmatrix} {\check\alpha}_{2n} & {\check\rho}_{2n} \\
{\check\rho}_{2n} & -\overline{\check\alpha}_{2n}
\end{bmatrix}.\nonumber
\end{eqnarray}
Or returning to (\ref{verbl-02})
\begin{equation}\label{verbl-02-rep}
\begin{bmatrix} {\check K}_{n,m} & \widetilde {\check K}_{n+1,m}\end{bmatrix} = \begin{bmatrix}
\widetilde {\check K}_{n,m} & {\check K}_{n,m+1}\end{bmatrix}
\begin{bmatrix}
{\check\alpha}_{n+m} & {\check\rho}_{n+m} \\
{\check\rho}_{n+m} & -\overline{\check\alpha}_{n+m}\end{bmatrix}.
\end{equation}
This can be rearranged as follows
\begin{equation}\label{verbl-02-rearange}
{\check\rho}_{n+m}\begin{bmatrix} {\check K}_{n,m+1} & \widetilde
{\check K}_{n+1,m}\end{bmatrix} =
\begin{bmatrix} {\check K}_{n,m} & \widetilde {\check K}_{n,m}\end{bmatrix}
\begin{bmatrix}
1& -\overline {\check\alpha}_{n+m} \\
-{\check\alpha}_{n+m} & 1 \end{bmatrix}.
\end{equation}
Substituting (\ref{K-nm-rep}) and (\ref{tilde-K-nm-rep}) in
(\ref{verbl-02-rearange}) we get
\begin{eqnarray}\label{verbl-02-subst}
&& {\check\rho}_{n+m}\begin{bmatrix} \frac{t^{n} \overline {\check
a}_{n+m+1}}{1-\overline {\check\omega}_{n+m+1}\overline {\check
b}_{n+m+1}} & \frac{t^{n}\overline {\check
a}_{n+m+1}\overline{\check\omega}_{n+m+1}}{1-\overline
{\check\omega}_{n+m+1}\overline {\check b}_{n+m+1}}
\\ \\
\frac{\bar t^{m+1} {\check
a}_{n+m+1}\check\omega_{n+m+1}}{1-{\check
b}_{n+m+1}\check\omega_{n+m+1}} & \frac{\bar t^{m+1} {\check
a}_{m+n+1}}{1-{\check s}_{m+n+1}\check\omega_{m+n+1}}
\end{bmatrix}
= \\ \nonumber \\
&&\begin{bmatrix} \frac{t^{n} \overline {\check
a}_{m+n}}{1-\overline {\check\omega}_{m+n}\overline {\check
s}_{m+n}} & \frac{t^{n-1}\overline {\check
a}_{n+m}\overline{\check\omega}_{n+m}}{1-\overline
{\check\omega}_{n+m}\overline {\check b}_{n+m}}
\\  \\
\frac{\bar t^{m}{\check a}_{m+n}\check\omega_{m+n}} {1-{\check
s}_{m+n}\check\omega_{m+n}} & \frac{\bar t^{m+1} {\check
a}_{n+m}}{1-{\check b}_{n+m}\check\omega_{n+m}}
\end{bmatrix}
\begin{bmatrix}
1& -\overline {\check\alpha}_{n+m} \\
-{\check\alpha}_{n+m} & 1 \end{bmatrix},\nonumber
\end{eqnarray}
or
\begin{equation}\label{verbl-02-rearange-through-Sigma}
{\check\rho}_{n+m}\Sigma^{(21)}_{n,m+1} =\Sigma^{(21)}_{n,m}
\begin{bmatrix} 1 & 0 \\ 0 & \overline t\end{bmatrix}
\begin{bmatrix}
1& -\overline {\check\alpha}_{n+m} \\
-{\check\alpha}_{n+m} & 1 \end{bmatrix}.
\end{equation}
We can write $\Sigma^{(21)}_{n,m}$ as
\begin{eqnarray}
\label{Sigma-21-renorm-1} \Sigma^{(21)}_{n,m} =\begin{bmatrix} t^n
& 0
\\ 0 & \overline t^m \end{bmatrix}
\Sigma^{(21)\prime}_{n+m},
\end{eqnarray}
where
\begin{eqnarray}
\label{Sigma-21-renorm-2}\Sigma^{(21)\prime}_{n+m}
=\begin{bmatrix}\overline A_{n+m} & 0
\\
0  & A_{n+m}
\end{bmatrix}
\begin{bmatrix} 1 & \overline{\check\omega}_{n+m} \\
\check\omega_{n+m} & 1
\end{bmatrix}
\end{eqnarray}
and
\begin{equation}\label{a-mn-renorm}
A_{n+m}=\frac{ {\check a}_{n+m}}{1-\check\omega_{n+m}{\check
b}_{n+m}}.
\end{equation}
Then (\ref{verbl-02-rearange-through-Sigma}) reads as
\begin{equation}\label{verbl-02-rearange-through-Sigma-2}
{\check\rho}_{n+m}
\begin{bmatrix} 1 & 0 \\ 0 & \overline t\end{bmatrix}
\Sigma^{(21)\prime}_{n+m+1} =\Sigma^{(21)\prime}_{n+m}
\begin{bmatrix} 1 & 0 \\ 0 & \overline t\end{bmatrix}
\begin{bmatrix}
1& -\overline {\check\alpha}_{n+m} \\
-{\check\alpha}_{n+m} & 1 \end{bmatrix}
\end{equation}
and (\ref{verbl-02-subst}) reads as
\begin{eqnarray}\label{verbl-02-subst-2}
&& {\check\rho}_{n+m}\begin{bmatrix} \overline A_{n+m+1} &
\overline A_{n+m+1}\overline{\check\omega}_{n+m+1}
\\ \\
 A_{n+m+1}\check\omega_{n+m+1} & A_{n+m+1}
\end{bmatrix}
= \\ \nonumber \\
&&\begin{bmatrix} \overline A_{n+m} & \overline
A_{n+m}\overline{\check\omega}_{n+m}\overline t
\\ \\
 A_{n+m}\check\omega_{n+m}t & A_{n+m}
\end{bmatrix}
\begin{bmatrix}
1& -\overline {\check\alpha}_{n+m} \\
-{\check\alpha}_{n+m} & 1 \end{bmatrix}.\nonumber
\end{eqnarray}
The second row of this relation reads as
\begin{eqnarray}\label{verbl-02-subst-3}
&& {\check\rho}_{n+m}\begin{bmatrix}
 A_{n+m+1}\check\omega_{n+m+1} & A_{n+m+1}
\end{bmatrix}
= \\ \nonumber \\
&&\begin{bmatrix}
 A_{n+m}\check\omega_{n+m}t & A_{n+m}
\end{bmatrix}
\begin{bmatrix}
1& -\overline {\check\alpha}_{n+m} \\
-{\check\alpha}_{n+m} & 1 \end{bmatrix},\nonumber
\end{eqnarray}
or, comparing entries,
$$
{\check\rho}_{n+m}
 A_{n+m+1}\check\omega_{n+m+1} = A_{n+m+1}
(\check\omega_{n+m}t - \check\alpha_{n+m})
$$
and
$$
{\check\rho}_{n+m}
 A_{n+m+1} = A_{n+m+1}
(1-t\check\omega_{n+m}t\overline{\check\alpha}_{n+m})
$$
In particular, this implies that
\eqbg{omega-recurrence}
\check\omega_{n+m+1}=\frac{t\check\omega_{n+m}-{\check\alpha}_{n+m}}{1-t\check\omega_{n+m}\overline{\check\alpha}_{n+m}}.
\eqen
%

%
%

\bigskip

\section{Asymptotic, Convergence, CMV basis and shift bases}

\bigskip

Space $L^R$ has an additional structure which is responsible for
the absolute continuity of the $2\times 2$ scattering measure of
the operator $U_R$ and for the special form of its density
$\begin{bmatrix} 1 & \overline R \\ R & 1 \end{bmatrix}$. Consider
the following two subspaces of $L^R$:
$$\mathcal L_1={\rm Clos}\ \bigcup\limits_{n=-\infty}^\infty \bigcap\limits_{m=1}^\infty
\mathcal H_{n,m}=\ \bigcup\limits_{n=-\infty}^\infty \mathcal
H_{n,\infty},\ {\rm where}\quad \mathcal H_{n,\infty}=
\bigcap\limits_{m=1}^\infty \mathcal H_{n,m}
$$
and
$$\mathcal L_2={\rm Clos}\ \bigcup\limits_{m=-\infty}^\infty \bigcap\limits_{n=1}^\infty
\mathcal H_{n,m}=\ \bigcup\limits_{m=-\infty}^\infty \mathcal
H_{\infty ,m},\ {\rm where}\quad \mathcal H_{\infty ,m}=
\bigcap\limits_{n=1}^\infty \mathcal H_{n,m}.
$$
\begin{lemma}
$$
\mathcal H_{n, \infty}=\begin{bmatrix} 1 \\ R
\end{bmatrix} t^n H^2_+, \quad
\mathcal L_{1}=\begin{bmatrix} 1 \\ R
\end{bmatrix} L^2
$$
and
$$
\mathcal H_{\infty , m}=\begin{bmatrix} \overline R \\
1
\end{bmatrix} \overline t^{\ m} H^2_-,\quad
\mathcal L_{2}=\begin{bmatrix} \overline R \\
1
\end{bmatrix} L^2
$$
\end{lemma}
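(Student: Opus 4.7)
The forward inclusions $\begin{bmatrix}1\\R\end{bmatrix}t^n H^2_+\subseteq\mathcal H_{n,\infty}$ and $\begin{bmatrix}\overline R\\1\end{bmatrix}\overline t^{\,m}H^2_-\subseteq\mathcal H_{\infty,m}$ are immediate from (\ref{H-nm-01}): both subspaces lie inside every $\check{\mathcal H}_{n,m}$ by construction, hence inside the respective intersections. Once the two equalities for the intersections are established, the formulas for $\mathcal L_1$ and $\mathcal L_2$ follow by taking the closed union over $n$ (resp.\ $m$), using that $\bigcup_n t^n H^2_+$ and $\bigcup_m \overline t^{\,m}H^2_-$ are dense in $L^2$, together with the isometric character of the embeddings $g\mapsto\begin{bmatrix}1\\R\end{bmatrix}g$ and $g\mapsto\begin{bmatrix}\overline R\\1\end{bmatrix}g$ verified below.

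For the reverse inclusion I would pass to orthogonal complements. Starting from the pointwise identities
$$\frac1{|T|^2}\begin{bmatrix}1&-\overline R\\-R&1\end{bmatrix}\begin{bmatrix}1\\R\end{bmatrix}=\begin{bmatrix}1\\0\end{bmatrix},\qquad \frac1{|T|^2}\begin{bmatrix}1&-\overline R\\-R&1\end{bmatrix}\begin{bmatrix}\overline R\\1\end{bmatrix}=\begin{bmatrix}0\\1\end{bmatrix},$$
the inner product (\ref{inner-prod-01}) collapses to
$$\langle f,\begin{bmatrix}1\\R\end{bmatrix}g\rangle_{L^R}=\int_\bbT f_1\,\overline g\,dt,\qquad \langle f,\begin{bmatrix}\overline R\\1\end{bmatrix}g\rangle_{L^R}=\int_\bbT f_2\,\overline g\,dt,$$
for $f=\begin{bmatrix}f_1\\f_2\end{bmatrix}\in L^R$ and $g\in L^2$. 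These formulas confirm that both embeddings are isometric into $L^R$ and yield the explicit description
$$\check{\mathcal H}_{n,m}^{\perp}=\{f\in L^R:\hat f_1(k)=0\text{ for }k\geq n,\ \hat f_2(k)=0\text{ for }k\leq -m-1\}.$$
Consequently the desired identity $\mathcal H_{n,\infty}=\begin{bmatrix}1\\R\end{bmatrix}t^n H^2_+$ is equivalent to showing that every $f\in L^R$ with $\hat f_1(k)=0$ for $k\geq n$ lies in the $L^R$-closure of $\bigcup_{m\geq 1}\check{\mathcal H}_{n,m}^{\perp}$.

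Given such an $f$, I would construct approximating vectors $f^{(m)}\in\check{\mathcal H}_{n,m}^{\perp}$ by a two-step correction. Put $g_m=P_{\overline t^{\,m}H^2_-}(f_2)\in\overline t^{\,m}H^2_-$ (killing the tail of $f_2$ at Fourier indices $\leq -m-1$) and $h_m=P_{t^n H^2_+}(\overline R g_m)\in t^n H^2_+$ (restoring the vanishing of the first-coordinate Fourier coefficients at $k\geq n$ after the first subtraction), and let $f^{(m)}=f-\begin{bmatrix}\overline R\\1\end{bmatrix}g_m+\begin{bmatrix}1\\R\end{bmatrix}h_m$ be the initial candidate. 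The added term may reintroduce a small low-frequency tail via $Rh_m$; iterating the two-step correction produces a Neumann-type series in the shifted Hankel operator $B_m=P_{t^n H^2_+}\overline R\,|_{\overline t^{\,m}H^2_-}\colon\overline t^{\,m}H^2_-\to t^n H^2_+$. The main obstacle is twofold: one must show $\|B_m\|<1$ so the Neumann iteration converges, which is an instance of Nehari's theorem together with the Szeg\H o condition (\ref{Szege-0}); and one must show the cumulative correction has $L^R$-norm tending to $0$, which follows from $\|g_m\|_{L^2}=\|P_{\overline t^{\,m}H^2_-}f_2\|_{L^2}\to 0$ as $m\to\infty$ and the isometric embeddings transferring this $L^2$-decay to $L^R$. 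The identity $\mathcal H_{\infty,m}=\begin{bmatrix}\overline R\\1\end{bmatrix}\overline t^{\,m}H^2_-$ is proved by the symmetric argument with the two components interchanged.
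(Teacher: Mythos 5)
The paper states this lemma without any proof, so I can only judge your argument on its own merits. Your preliminary reductions are all correct and useful: the two embeddings are isometric, the description of $\check{\mathcal H}_{n,m}^{\perp}$ via Fourier coefficients of the components is right, and the lemma is indeed equivalent to the density of $\bigcup_{m}\check{\mathcal H}_{n,m}^{\perp}$ in $\bigl(\left[\begin{smallmatrix}1\\ R\end{smallmatrix}\right]t^nH^2_+\bigr)^{\perp}$. The genuine gap is the assertion that $\|B_m\|<1$ ``is an instance of Nehari's theorem together with the Szeg\H o condition.'' Nehari's theorem gives $\|B_m\|=\operatorname{dist}_{L^\infty}\bigl(\overline t^{\,n+m}\overline R,\,H^\infty\bigr)$, so $\|B_m\|<1$ holds if and only if the truncated Nehari problem (prescribing $\hat R(k)$ for $k\le -(n+m+1)$) admits a solution of sup-norm \emph{strictly} less than one; equivalently, it is the positive-angle condition between $\left[\begin{smallmatrix}1\\ R\end{smallmatrix}\right]t^nH^2_+$ and $\left[\begin{smallmatrix}\overline R\\ 1\end{smallmatrix}\right]\overline t^{\,m}H^2_-$ inside $L^R$. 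The standing hypothesis (\ref{Szege-0}) only provides the solution $R$ itself, with $|R|<1$ a.e.\ but possibly $\operatorname{ess\,sup}|R|=1$, together with $\log(1-|R|)\in L^1$; in the paper's notation this yields exactly that ${\check a}_{n+m}$ is a nonzero \emph{outer} function, not that it is bounded away from zero. Since $1-|R|^2=(1-|\check\omega|^2)|{\check a}_{n+m}|^2/|1-\check\omega{\check b}_{n+m}|^2$ for every Schur parameter $\check\omega$, strict contractivity of some solution forces a lower bound on $|{\check a}_{n+m}|$ that the Szeg\H o class does not supply. The whole point of this (Szeg\H o-class, $\ell^2$-Verblunsky) setting is that the angle may be zero, so the Neumann series need not converge and the proof collapses at its central step; nor can one escape by taking $m$ large, since $\lim_m\|B_m\|$ is an essential Hankel norm which again need not be $<1$.

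The lemma can be proved without any angle condition by exploiting the reducing subspaces themselves. Let $\mathcal L_1'=\left[\begin{smallmatrix}1\\ R\end{smallmatrix}\right]L^2$ and $\mathcal L_2'=\left[\begin{smallmatrix}\overline R\\ 1\end{smallmatrix}\right]L^2$; one checks from (\ref{inner-prod-01}) that $P_{\mathcal L_1'}f=\left[\begin{smallmatrix}1\\ R\end{smallmatrix}\right]f_1$ and $P_{\mathcal L_2'}f=\left[\begin{smallmatrix}\overline R\\ 1\end{smallmatrix}\right]f_2$, and that on the generators of (\ref{H-nm-01})
\begin{equation*}
(I-P_{\mathcal L_1'})\begin{bmatrix} 1 & \overline R \\ R & 1 \end{bmatrix}\begin{bmatrix} g_1\\ g_2\end{bmatrix}=\begin{bmatrix}0\\ |T|^2g_2\end{bmatrix},\qquad
(I-P_{\mathcal L_2'})\begin{bmatrix} 1 & \overline R \\ R & 1 \end{bmatrix}\begin{bmatrix} g_1\\ g_2\end{bmatrix}=\begin{bmatrix}|T|^2g_1\\ 0\end{bmatrix},
\end{equation*}
with $L^R$-norms $\|Tg_2\|_{L^2}$ and $\|Tg_1\|_{L^2}$ respectively. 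Because $T$ is outer, the closure of $(I-P_{\mathcal L_1'})\check{\mathcal H}_{n,m}$ is identified with $\overline t^{\,m}H^2_-$, whose intersection over $m$ is $\{0\}$; hence every $f\in\mathcal H_{n,\infty}$ satisfies $f_2=Rf_1$. Feeding this into the second identity gives $Tf_1\in t^nH^2_+$, and Smirnov's maximum principle (again outerness of $T$, plus $f_1\in L^2$) yields $f_1\in t^nH^2_+$. This uses exactly the Szeg\H o hypothesis and nothing more, whereas your route needs a strict Hankel contraction that is not available here.
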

Then $\mathcal L_{1}$ and $\mathcal L_{2}$ reduce $U_R$ and it
acts as a bilateral shift on $\mathcal L_{1}$ and on $\mathcal
L_{2}$. The corresponding wandering subspaces are defined as
$$
\mathcal H_{n,\infty}\ominus \mathcal H_{n+1,\infty}=\{
\begin{bmatrix} 1 \\ R \end{bmatrix}t^n\}=\{e_n\},
\ {\rm and}\ \mathcal H_{\infty ,m}\ominus \mathcal H_{\infty
,m+1}= \{
\begin{bmatrix} \overline R \\ 1 \end{bmatrix}\overline t^{\ m+1}\}=\{d_m\}.
$$

We start with proving some asymptotic properties of functions
${\check K}_{n,m}$ and $\tilde {\check K}_{n,m}$.
\begin{lemma}\label{L:Conv-Proj}
Let $\mathcal G$ be a Hilbert space and $\mathcal G_j$ be its
closed subspaces such that
$$
\mathcal G \supset \mathcal G_1 \supset \mathcal G_2 \supset\cdots
$$
Let $\mathcal G_\infty = \bigcap\limits_{j=1}^\infty \mathcal
G_j$. Let $P_j$ be orthogonal projection on $\mathcal G_j$. Then
for every $g\in\mathcal G$
$$ P_j g \to P_\infty g, \quad j\to\infty .$$
\end{lemma}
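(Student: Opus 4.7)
The plan is to reduce the question to showing that $h_j := P_j g$ is a Cauchy sequence, identify the limit, and then verify it coincides with $P_\infty g$. Decomposing $g = P_\infty g + g_0$ where $g_0 = g - P_\infty g$, and noting that $P_\infty g \in \mathcal G_\infty \subseteq \mathcal G_j$ so that $P_j(P_\infty g) = P_\infty g$, we see it suffices to handle $g$ itself; the argument below does not actually require this reduction but it clarifies what needs to be proved.

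First I would exploit the nesting $\mathcal G_{j+1}\subset \mathcal G_j$, which gives the operator identity $P_k = P_k P_j$ whenever $k \ge j$, and therefore $h_k = P_k h_j$. From this,
\[
\|h_j - h_k\|^2 = \|h_j\|^2 - 2\operatorname{Re}\langle h_j, P_k h_j\rangle + \|h_k\|^2 = \|h_j\|^2 - \|h_k\|^2,
\]
using $\langle h_j, P_k h_j\rangle = \|P_k h_j\|^2 = \|h_k\|^2$. Since $\|h_j\| = \|P_j g\|$ is a decreasing sequence bounded below by $0$, it converges, and the display above shows $\{h_j\}$ is Cauchy. Let $h_\infty = \lim_j h_j$.

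Next I would identify $h_\infty$. Each $h_j \in \mathcal G_j$, and for any fixed $k$ we have $h_j \in \mathcal G_k$ for $j \ge k$; since $\mathcal G_k$ is closed, $h_\infty \in \mathcal G_k$. This holds for all $k$, so $h_\infty \in \mathcal G_\infty$. To see $h_\infty = P_\infty g$, take any $v \in \mathcal G_\infty \subseteq \mathcal G_j$; then $\langle g - h_j, v\rangle = 0$ because $h_j = P_j g$, and letting $j \to \infty$ gives $\langle g - h_\infty, v\rangle = 0$. Thus $g - h_\infty \perp \mathcal G_\infty$ and $h_\infty \in \mathcal G_\infty$, so $h_\infty = P_\infty g$ by the defining property of the orthogonal projection.

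The main (and really only) obstacle is the Cauchy step; once the telescoping identity $\|h_j - h_k\|^2 = \|h_j\|^2 - \|h_k\|^2$ is recognized, everything else is a standard closed-subspace argument. There is no subtle compactness or separability hypothesis needed, and the lemma is really a general fact about monotone sequences of projections in an arbitrary Hilbert space.
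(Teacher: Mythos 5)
Your proof is correct and complete: the identity $P_k=P_kP_j$ for $k\ge j$, the telescoping formula $\|h_j-h_k\|^2=\|h_j\|^2-\|h_k\|^2$ combined with monotonicity of $\|P_jg\|$ to get the Cauchy property, and the identification of the limit via $h_\infty\in\mathcal G_\infty$ and $g-h_\infty\perp\mathcal G_\infty$ are all valid. The paper itself states this lemma without any proof, treating it as a standard fact about decreasing chains of closed subspaces, so there is nothing to compare against; your argument is exactly the standard one that the authors are implicitly invoking.
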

\begin{theorem}\label{T:Conv-CMV-to-Scatt}
\begin{equation}\label{en=lim}
{\check K}_{n,m}\to \begin{bmatrix} 1 \\ R \end{bmatrix}
t^n=e_n,\quad m\to\infty ,
\end{equation}
\begin{equation}\label{dm=lim}
\widetilde {\check K}_{n,m}\to
\begin{bmatrix} \overline R \\ 1 \end{bmatrix} \overline t^{m+1}=d_m,\quad n\to\infty
\end{equation}
and
$${\check a}_j(0)\to 1, \quad j\to\infty .$$
\end{theorem}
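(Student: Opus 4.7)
The plan is to deduce the theorem from Lemma \ref{L:Conv-Proj} applied to the decreasing chains supplied by the previous lemma, together with a short explicit calculation that pins down the scalar factor relating $\check K_{n,m}$ to $e_n$ (and $\widetilde{\check K}_{n,m}$ to $d_m$). The point is that this scalar turns out to be precisely $\check a_{n+m}(0)>0$, so the same argument simultaneously forces $\check a_j(0)\to 1$.

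For (\ref{en=lim}) fix $n$. The previous lemma identifies $\bigcap_m \mathcal H_{n+1,m}=\mathcal H_{n+1,\infty}=\begin{bmatrix}1\\ R\end{bmatrix}t^{n+1}H^2_+$, and a direct computation from (\ref{inner-prod-01}) gives $\|e_n\|_{L^R}=1$ together with $e_n\perp\mathcal H_{n+1,\infty}$. Since $e_n\in\mathcal H_{n,\infty}\subset\mathcal H_{n,m}$ for every $m$, Lemma \ref{L:Conv-Proj} applied to the chain $\{\mathcal H_{n+1,m}\}_{m\ge 1}$ yields $P_{\mathcal H_{n+1,m}}e_n\to 0$. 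Because $\check K_{n,m}$ is a unit vector spanning the one-dimensional $\mathcal H_{n,m}\ominus\mathcal H_{n+1,m}$,
$$
\langle e_n,\check K_{n,m}\rangle_{L^R}\,\check K_{n,m}=e_n-P_{\mathcal H_{n+1,m}}e_n\longrightarrow e_n\quad\text{in }L^R.
$$

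To identify the scalar, set $W=\begin{bmatrix}1&\overline R\\ R&1\end{bmatrix}$ and write $e_n=W\begin{bmatrix}t^n\\ 0\end{bmatrix}$, so $W^{-1}e_n=\begin{bmatrix}t^n\\ 0\end{bmatrix}$. Using (\ref{inner-prod-01}) and the self-adjointness of $W^{-1}$ the pairing reduces to the $L^2$ pairing of the first component of $\check K_{n,m}$ with $t^n$:
$$
\langle\check K_{n,m},e_n\rangle_{L^R}=\bigl\langle t^n\overline{A}_{n+m},\,t^n\bigr\rangle_{L^2}=\overline{A_{n+m}(0)},
$$
where $A_{n+m}$ is given by (\ref{a-mn-renorm}). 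Since $\check b_{n+m}(0)=0$ and the normalization gives $\check a_{n+m}(0)>0$, one has $A_{n+m}(0)=\check a_{n+m}(0)>0$, hence $\langle e_n,\check K_{n,m}\rangle_{L^R}=\check a_{n+m}(0)$ is real and positive. Therefore
$$
\check a_{n+m}(0)\,\check K_{n,m}\longrightarrow e_n\quad\text{in }L^R;
$$
taking $L^R$-norms forces $\check a_{n+m}(0)\to 1$, and dividing recovers (\ref{en=lim}) while simultaneously proving $\check a_j(0)\to 1$.

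The argument for (\ref{dm=lim}) is entirely symmetric: fix $m$, apply Lemma \ref{L:Conv-Proj} to the chain $\{\mathcal H_{n,m+1}\}_{n\ge 1}$ whose intersection is $\mathcal H_{\infty,m+1}=\begin{bmatrix}\overline R\\ 1\end{bmatrix}\overline t^{m+1}H^2_-$, observe that $d_m\in\mathcal H_{\infty,m}$ is a unit vector orthogonal to $\mathcal H_{\infty,m+1}$, and compute $\langle\widetilde{\check K}_{n,m},d_m\rangle_{L^R}$ via the second-component representation in (\ref{tilde-K-nm-rep}); the same calculation yields the positive scalar $A_{n+m}(0)=\check a_{n+m}(0)$, and the norm argument closes the loop. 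The main obstacle I anticipate is just the careful bookkeeping in these two inner-product calculations: it is the positivity of $\langle e_n,\check K_{n,m}\rangle_{L^R}$ and $\langle d_m,\widetilde{\check K}_{n,m}\rangle_{L^R}$ (not merely $|\cdot|\to 1$) that fixes the phase of the limit and upgrades the already-known convergence of $\check a_j(0)$ to some $L\in(0,1]$ to the sharp value $L=1$.
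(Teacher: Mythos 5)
Your proposal is correct and follows essentially the same route as the paper: apply Lemma \ref{L:Conv-Proj} to the decreasing chain $\{\mathcal H_{n+1,m}\}_m$ (resp.\ $\{\mathcal H_{n,m+1}\}_n$), compute $\langle \check K_{n,m}, e_n\rangle = \check a_{n+m}(0)>0$ from the explicit formula (\ref{K-nm}), and use $\|e_n\|=1=\|\check K_{n,m}\|$ to force $\check a_{n+m}(0)\to 1$ and hence $\check K_{n,m}\to e_n$. The only cosmetic difference is that the paper phrases the final step via the identities $\|e_n-\check a_{n+m}(0)\check K_{n,m}\|^2=1-\check a_{n+m}(0)^2$ and $\|e_n-\check K_{n,m}\|^2=2-2\check a_{n+m}(0)$, whereas you take norms of the convergent sequence and divide.
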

\begin{proof}
Note first that $\begin{bmatrix} 1 \\ R \end{bmatrix} t^n$ belongs
to $\mathcal H_{n,m}$ and does not belong to $\mathcal H_{n+1,m}$.
Using explicit formula (\ref{K-nm}) we compute
$$
\langle {\check K}_{n,m}, \begin{bmatrix} 1 \\ R \end{bmatrix}
t^n\rangle = (\overline t^{\ n}{\check K}_{n,m})_1 (0)={\check
a}_{n+m}(0).
$$
Therefore,
$$
\begin{bmatrix} 1 \\ R \end{bmatrix} t^n - {\check a}_{n+m}(0){\check K}_{n,m}=
P_{\mathcal H_{n+1,m}} \begin{bmatrix} 1 \\ R \end{bmatrix} t^n .
$$
By Lemma \ref{L:Conv-Proj},
$$
P_{\mathcal H_{n+1,m}} \begin{bmatrix} 1 \\ R \end{bmatrix} t^n\to
P_{\mathcal H_{n+1,\infty}} \begin{bmatrix} 1 \\ R \end{bmatrix}
t^n = 0, \quad m\to\infty .
$$
Since
$$
\Vert\begin{bmatrix} 1 \\ R \end{bmatrix} t^n - {\check
a}_{n+m}(0){\check K}_{n,m}\Vert^2= 1-{\check a}_{n+m}(0)^2,
$$
we get that ${\check a}_{n+m}(0)\to 1$ as $m\to\infty $. Thus, we
proved that
$${\check a}_{j}(0)\to 1, \quad j\to\infty .$$
Using again formulas (\ref{K-nm}) and (\ref{tilde-K-nm}), we can
see that
$$
\Vert\begin{bmatrix} 1 \\ R \end{bmatrix} t^n - {\check
K}_{n,m}\Vert^2= 2-2{\check a}_{n+m}(0)\to 0,\quad m\to\infty
$$
and
$$
\Vert\begin{bmatrix} \overline R \\ 1 \end{bmatrix} \overline t^{\
m+1}- \widetilde {\check K}_{n,m}\Vert^2 = 2-2{\check
a}_{n+m}(0)\to 0 \quad n\to\infty
$$
\end{proof}
Asymptotic of ${\check K}_{n,n}$ and $\widetilde {\check
K}_{n+1,n}$ can be obtained as a corollary of Theorem
\ref{T:Conv-CMV-to-Scatt}.
\begin{corollary}
\end{corollary}
$$
\Vert\begin{bmatrix} 1 \\ R \end{bmatrix} t^n - {\check
K}_{n,n}\Vert^2= 2-2{\check a}_{2n}(0)\to 0,\quad n\to\infty ,
$$
and
$$
\Vert\begin{bmatrix} \overline R \\ 1 \end{bmatrix} \overline t^{\
n+1} - \widetilde {\check K}_{n+1,n}\Vert^2= 2-2{\check
a}_{2n+1}(0)\to 0,\quad n\to\infty .
$$
\begin{remark}\label{conv-a-0}
We observe that $\lim\limits_{j\to\infty}{\check a}_j(0)=1$ is
equivalent to
\begin{equation}\label{prod-conv}
{\check a}_n(0)=\prod\limits_{j=n}^\infty {\check\rho}_j .
\end{equation}
Thus, existence of the scattering function $R$ implies convergence
of the above product, which is equivalent to the convergence of
the series
\begin{equation}\label{sum-conv}
\sum\limits_{j=n}^\infty
(1-{\check\rho}_j^2)=\sum\limits_{j=n}^\infty |{\check\alpha}_j|^2
<\infty .
\end{equation}
\end{remark}
Using Theorem \ref{T:Conv-CMV-to-Scatt} we can 
get an
expansion of $e_n$ and $d_m$ in the CMV basis constructed above in
terms of the Verblunski coefficients. To this end we notice that
$$
{\check K}_{n,n+2j}= U_R^{-j}{\check K}_{n+j, n+j}
$$
and
$$
\widetilde {\check K}_{n+2j+1,n}= U_R^{j}\widetilde {\check
K}_{n+j+1, n+j}.
$$

\bigskip

\section{Direct scattering for CMV matrices}

\bigskip

Given Verblunsky coefficients and corresponding unitary CMV matrix
in the basis (\ref{basis-01}), we want to obtain $R$. We assume
that convergence (\ref{en=lim}) and (\ref{dm=lim}) holds (which is
equivalent to convergence of product (\ref{prod-conv}),
equivalently, of the sum (\ref{sum-conv})). Thus we obtain
wandering systems of vectors $e_n$ and $d_m$. The pair of vectors
$e_0$ and $d_0$ is $*$-cyclic (because the pair ${\check K}_{0,0}$
and $\widetilde {\check K}_{1,0}$ is $*$-cyclic ???). Then $R$ is
the Adamjan-Arov scattering function. Its harmonic continuation is
computed as
$$
R(z)=d_0^*\{(I-zU_R^*)^{-1}+(I-\bar z U_R)^{-1}-I\}e_0.
$$

\bigskip

\section{Spectral representation}

\bigskip
Formula (\ref{U-R-matrix}) implies, in particular, that pair of
vectors ${\check K}_{n,n}$ and $\widetilde {\check K}_{n+1,n}$ is
$*$-cyclic for the operator $U_R$. Indeed...Proof...Spectral
measure of $U_R$ is defined in a similar way but using the vectors
${\check K}_{n,n}$ and $\widetilde {\check K}_{n+1, n}$ (this pair
is $*$-cyclic). It is easier to compute the spectral measure with
respect to the vectors ${\check K}_{n,n}$ and $t\widetilde {\check
K}_{n,n}$. By formulas (\ref{K-nm}), (\ref{tilde-K-nm}) and
(\ref{AAK}) it is equal to ${\check\Sigma}_{n,n}^{(11)}$ which is
defined by formula (\ref{Ucharfunc'11-sh}). Formula
(\ref{verbl-02}) implies that
\begin{equation}\nonumber
\begin{bmatrix} {\check K}_{n,m} & \widetilde {\check K}_{n+1,m}\end{bmatrix} = \begin{bmatrix}
{\check K}_{n,m} & \widetilde  {\check K}_{n,m}\end{bmatrix}
\begin{bmatrix}
1 & -\frac{\overline{\check\alpha}_{n+m}}{{\check\rho}_{n+m}} \\ \\
0 & \frac{1}{{\check\rho}_{n+m}}\end{bmatrix}.
\end{equation}
Thus the spectral measure with respect to ${\check K}_{n,n}$ and
$\widetilde {\check K}_{n+1, n}$ is given by
$$
\begin{bmatrix}
1 & 0 \\ \\
-\frac{{\check\alpha}_{2n}}{{\check\rho}_{2n}} &
\frac{t}{{\check\rho}_{2n}}
\end{bmatrix}
{\check\Sigma}_{n,n}^{(11)}
\begin{bmatrix}
1 & -\frac{\overline{\check\alpha}_{2n}}{{\check\rho}_{2n}} \\ \\
0 & \frac{\overline t}{{\check\rho}_{2n}}
\end{bmatrix}.
$$
Note that $\log\det{\check\Sigma}_{n,n}\in L^1$ because
${\check\Sigma}_{n,n}$ is factorized if $1-|\check\omega_{n+m}|^2$
is factorized, which is the case since $1-|R|^2$ is factorized.

\bigskip
\noindent {\bf Acknowledgment}. Alexander Kheifets wishes to thank
Franz Peherstorfer, Peter Yuditskii and the Group for Dynamical
Systems and Approximation Theory, University of Linz, Austria, for
their kind hospitality during a two week stay in the summer of 2007,
where a part of this manuscript was written.

 \normalsize


\begin{thebibliography}{24}

\bibitem{AA}
V, Adamjan, D. Arov, \textit{Unitary couplings of semi-unitary
operators}. (Russian)  Mat. Issled.  1  1966 vyp. 2, 3--64.

\bibitem{AAK}
V. Adamjan, D. Arov, M Kre\u\i n,  \textit{Infinite Hankel matrices
and generalized Carathéodory-Fejér and I. Schur problems}. (Russian)
Funkcional. Anal. i Prilo\v zen.  2  (1968), no. 4, 1--17.

\bibitem{AK} A. Kheifets, \textit{Parametrization of solutions of the Nehari problem and
nonorthogonal dynamics}.  Operator theory and interpolation
(Bloomington, IN, 1996),  213--233, Oper. Theory Adv. Appl., 115,
Birkhäuser, Basel, 2000.

\bibitem{Mar}
V. Marchenko, \textit{Sturm-Liouville operators and applications}.
 Operator Theory: Adv.
and Appl., {\bf 22}. Birkh\"auser Verlag, Basel, 1986.



\bibitem{LP}
P. Lax, R. Phillips, \textit{Scattering theory. Pure and Applied
Mathematics}, Vol. 26 Academic Press, New York-London 1967 xii+276
pp.




\bibitem{PeVoYu} F. Peherstorfer, A. Volberg,
and P. Yuditskii, {\em On a new asymptotic problem in the scattering
setting}, Preprint, 2005.


\bibitem{Sv1} B. Simon, \textit{Orthogonal Polynomials on the Unit
Circle, Part 1: Classical Theory; Part 2: Spectral Theory}, AMS
Colloquium Series, AMS, Providence, RI, 2005.


\bibitem{TTh}
T. Tao, Ch. Thiele, \textit{Nonlinear Fourier Analysis}, IAS/Park
City Mathematics Series.


\bibitem{VYu}
    A. Volberg and P. Yuditskii,
\textit{ On the inverse scattering problem for Jacobi  matrices with
the spectrum  on an interval, a finite system of intervals or a
Cantor set of positive length},
    Comm. Math. Phys. {\bf 226} (2002),
567--605.


\end{thebibliography}
\end{document}